\documentclass[11pt]{amsart}

%%PACKAGE
\usepackage{amsmath,amssymb}
\usepackage{amsthm}
\usepackage{enumerate}
\usepackage{multicol}
\usepackage{hyperref}
\usepackage{url}

%%COLOR
\usepackage{color}

%%LISTINGS
% Default fixed font does not support bold face
\usepackage{listings}
% Python style for highlighting
\lstset{frame=tb,
  language=Python,
  aboveskip=3mm,
  belowskip=3mm,
  showstringspaces=false,
  columns=flexible,
  basicstyle={\small\ttfamily},
  numbers=left,
  numberstyle=\tiny\color{gray},
  keywordstyle=\color{black},
  commentstyle=\color{black},
  stringstyle=\color{black},
  breaklines=true,
  breakatwhitespace=true,
  tabsize=3
}

%TIKZ
\usepackage{tikz}
\usetikzlibrary{matrix,arrows}

\newcommand{\diag}{\mathop{\mathrm{diag}}}

%%THEOREM
\newtheorem{theorem}{Theorem}
\newtheorem{lemma}[theorem]{Lemma}
\newtheorem{proposition}[theorem]{Proposition}
\newtheorem{corollary}[theorem]{Corollary}

\theoremstyle{definition}

\newtheorem{remark}[theorem]{Remark}
\newtheorem{claim}[theorem]{Claim}
\newtheorem{example}[theorem]{Example}

\title{Distance Ideals of Graphs}

\begin{document}
\maketitle
\begin{center}
\author{Carlos A. Alfaro}\footnotemark[1]
\author{Libby Taylor}\footnotemark[2]
\end{center}
\footnotetext[1]{
Banco de M\`exico, Mexico City, Mexico (carlos.alfaro@banxico.org.mx)
}
\footnotetext[2]{
School of Mathematics, Georgia Institute of Technology, 686 Cherry St, Atlanta, GA 30332
(libbytaylor@gatech.edu)
}
\footnotetext[3]{
C.A. Alfaro was partially supported by SNI and CONACyT.
}

\begin{abstract}
We introduce the concept of distance ideals of graphs, which can be regarded as a generalization of the Smith normal form and the spectra of the distance matrix and the Laplacian distance matrix of a graph.
We also obtain a classification of the graphs with at most one trivial distance ideal.
%And we use this classification  to prove that the trivial graph is the unique graph whose distance matrix has Smith normal form with no invariant factor equal to 1.
%We also show that the Smith form of the distance matrix for any graph has at least 2 invariant factors equal to 1.
\end{abstract}

\section{Introduction}

Let $M$ be an integral matrix, and let $\diag(f_1, f_2, \dots, f_{r})$ be its Smith normal form, so that  $f_1, f_2, \dots, f_{r}$ are positive integers such that $f_i \mid f_j$ for all $i\leq j$.
These integers are called the {\it invariant factors} of $M$.
%{\color{red}What is the rank of the distance matrix?}
Computing the Smith normal form of matrices has been of interest in combinatorics.
For instance, computing the Smith normal form of the adjacency or Laplacian matrix is a standard technique used to determine the Smith group and the critical group of a graph; see \cite{alfaval0,merino,rushanan}.
The critical group of a connected graph is especially interesting since, by Kirchoff's matrix-tree theorem, its order is equal to the number of spanning trees of the graph.
The study of the invariant factors of combinatorial matrices seems to have started in \cite{N} and was soon continued in \cite{WW}.
%The Smith group was first studied in \cite{}.
%(For non-connected graphs, the order of the critical group is equal to the number of spanning forests of the graph.)
 We refer the reader to \cite{stanley} for a survey on the Smith normal forms in combinatorics for more details in the topic.

Let $G=(V,E)$ be a connected graph.
The {\it distance} $d_G(u,v)$ between the vertices $u$ and $v$ is the number of edges in a shortest path between them.  The {\it distance matrix} $D(G)$ of $G$ is the matrix with rows and columns indexed by the vertices of $G$ with the $uv$-entry equal to $d_G(u,v)$.
Distance matrices were introduced by Graham and Pollack in the study of a data communication problem in \cite{GP}.
This problem involved finding appropriate addresses so that a message can move efficiently through a series of loops from its origin to its destination, choosing the best route at each switching point.
%Recently, there has been renewed interest in the loop switching problem \cite{D}.

% Let us consider the distance matrix of a connected graph $G$ as a linear operator on $\mathbb Z^n$, where $n$ is the number of vertices of $G$.  The {\it distance group} $\mathcal{D}(G)$ of $G$ is the torsion part of the cokernel of $D(G)$; that is, the quotient module $\mathbb{Z}^{n}/{\rm Im}\, D(G)$.
% %I don't think we really need a reference for this; it's pretty much common knowledge.
% %It is known \cite{jacobson} that this group can be described as follows:
% We can write
% \[
% \mathcal{D}(G)\cong \mathbb{Z}_{d_1} \oplus \mathbb{Z}_{d_2} \oplus \cdots \oplus\mathbb{Z}_{d_{r}},
% \]

Little is known about the Smith normal forms of distance matrices.
In \cite{HW}, the Smith normal forms of the distance matrices were determined for trees, wheels, cycles, and complements of cycles and were partially determined for complete multipartite graphs.
%Is this how we want to say this?
In \cite{BK}, the Smith normal form of the distance matrices of
unicyclic graphs and of the wheel graph with trees attached to each
vertex were obtained.

It is well known that the Smith normal form of a matrix over a principal ideal domain ({\it p.i.d.}) can be computed using row and column operations.
In fact, in \cite{KB}, Kannan and Bachem found polynomial algorithms for computing the Smith normal form of an integer matrix.
An alternative way of determining the Smith normal form is as follows.
Let $\Delta_i(G)$ denote the {\it greatest common divisor} of the $i$-minors of the distance matrix $D(G)$.  Then the $i$-{\it th} invariant factor $d_i$ is equal to $\Delta_i(G)/ \Delta_{i-1}(G)$, where $\Delta_0(G)=1$.
We will generalize on this method to develop the notion of distance ideals.

The paper is organized as follows.
In Section \ref{section:DefinitionDistanceIdeals}, we define distance ideals and explore their varieties, as well as their behaviour under taking induced subgraphs.
We finish this section by giving a description of the distance ideals of the complete graphs and star graphs.
In Section~\ref{section:classification}, we will give a classification of the graphs which have exactly 1 trivial distance ideal over $\mathbb{Z}$ and $\mathbb{R}$ in terms of forbidden induced subgraphs.
% in Section~\ref{section:DefinitionDistanceIdeals}.
%In Section~\ref{section:classification}, a classification of graphs with at most one trivial distance ideal is given.
%In Section~\ref{section:applications}, we give some applications to the distance group and Smith group.
%Given an integer $k$, let $f_k(G)$ be the number of invariant factors of the distance matrix of $G$ equal to $k${\color{red} (do we ever use this notation?)}.

%{\color{red}What is the Smith normal form of the adjacency matrix of the thick complete graph?}

\section{Distance ideals}\label{section:DefinitionDistanceIdeals}
Through the paper, we will assume all graphs are connected.
Given a connected graph $G=(V,E)$ and a set of indeterminates $X_G=\{x_u \, : \, u\in V(G)\}$, let $\diag(X_G)$ denote the diagonal matrix with the indeterminates in the diagonal and zeroes elsewhere.
The {\it generalized distance matrix} $D(G,X_G)$ of $G$ is the matrix with rows and columns indexed by the vertices of $G$ defined as $\diag(X_G)+D(G)$.
% \[
% D(G,X_G)_{u v}=
% \begin{cases}
% x_u & \text{ if } u=v,\\
% d_G(u,v) & \text{ otherwise}.
% \end{cases}
% \]
Note we can recover the distance matrix from the generalized distance matrix by evaluating $X_G$ at the zero vector, that is, $D(G)=D(G,\bf{0})$.

Let $\mathcal{R}[X_G]$ be the polynomial ring over a commutative ring $\mathcal{R}$ in the variables $X_G$.
For all $i\in[n]:=\{1,..., n\}$, the $i${\it-th distance ideal} $I^\mathcal{R}_i(G,X_G)$ of $G$ is the determinantal ideal given by
\[
\langle {\rm minors}_i(D(G,X_G))\rangle\subseteq \mathcal{R}[X_G],
\]
where $n$ is the number of vertices of $G$ and ${\rm minors}_i(D(G,X_G))$ is the set of the determinants of the $i\times i$ submatrices of $D(G,X_G)$.
% The analog of distance ideals for the Laplacian and adjacency matrices are known as critical ideals, which were defined in \cite{CV} and further studied in \cite{alfacorrval,alfalin,AV,AVV}, from which our study was originally inspired.
Computing the Gr\"obner basis of the distance ideals gives us a compact description of these ideals.

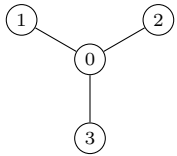
\begin{figure}[h!]
\begin{center}
\begin{tikzpicture}[scale=.7]
 	\tikzstyle{every node}=[minimum width=0pt, inner sep=2pt, circle]
    \draw (0,0) node[draw] (0) {\tiny $0$};
 	\draw (30:1.5) node[draw] (1) {\tiny $2$};
 	\draw (150:1.5) node[draw] (2) {\tiny $1$};
 	\draw (270:1.5) node[draw] (3) {\tiny $3$};
 	\draw (0) -- (1);
 	\draw (0) -- (2);
 	\draw (0) -- (3);
 \end{tikzpicture}
\end{center}
\caption{Claw graph $K_{1,3}$.}
\label{figure:lambda}
\end{figure}

\begin{example}
The generalized distance matrix of the claw graph $K_{1,3}$ is the following.
\[
D(K_{1,3},X_{K_{1,3}})=
\begin{bmatrix}
x_0 & 1 & 1 & 1\\
1 & x_1 & 2 & 2\\
1 & 2 & x_2 & 2\\
1 & 2 & 2 & x_3\\
\end{bmatrix}
\]
For this example, we will consider the distance ideals over $\mathbb{Z}[X_{K_{1,3}}]$.
%Since the $(1,1)$-entry of $D(K_{1,3},X_{K_{1,3}})$ is equal to one, then
It is obvious that $I^\mathbb{Z}_1(K_{1,3},X_{K_{1,3}})=\langle 1 \rangle$, since the $(0,1)$-entry of the generalized distance matrix is equal to 1.
The Gr\"obner basis of the second distance ideal $I^\mathbb{Z}_2(K_{1,3},X_{K_{1,3}})$ is
\[
\langle 2x_0 - 1, x_1 - 2, x_2 - 2, x_3 - 2\rangle.
\]
The Gr\"obner basis of $I^\mathbb{Z}_3(K_{1,3},X_{K_{1,3}})$ is equal to
\begin{eqnarray*}
\langle 2x_0x_1 - 4x_0 - x_1 + 2, 2x_0x_2 - 4x_0 - x_2 + 2, 2x_0x_3 - 4x_0 - x_3 + 2, \\
x_1x_2 - 2x_1 - 2x_2 + 4, x_1x_3 - 2x_1 - 2x_3 + 4, x_2x_3 - 2x_2 - 2x_3 + 4\rangle.
\end{eqnarray*}
Finally, the Gr\"obner basis of $I^\mathbb{Z}_4(K_{1,3},X_{K_{1,3}})$ is
\[
\langle x_0x_1x_2x_3 - 4x_0x_1 - 4x_0x_2 - 4x_0x_3 + 16x_0 - x_1x_2 - x_1x_3 + 4x_1 - x_2x_3 + 4x_2 + 4x_3 - 12\rangle.
\]
\end{example}

At the end of this section, we will compute the distance ideals of the star graphs, which is a family of graphs containing the claw.

An ideal is said to be {\it unit} or {\it trivial} if it is equal to $\langle1\rangle$. %(equivalently, if it is equal to $\mathcal{R}[X]$).
%Note that $I_1(G,X_G)\supseteq I_2(G,X_G)\supseteq \dots\supseteq I_n(G,X_G)$.
Let $\Phi_\mathcal{R}(G)$ denote the maximum integer $i$ for which $I^\mathcal{R}_i(G,X_G)$ is trivial.
Note that every graph with at least one non-loop edge has at least one trivial distance ideal.
%This holds because if $u$ and $v$ are adjacent vertices, the submatrix of the distance matrix induced by $u$ and $v$ is equal to $\begin{bmatrix} 0 & 1 \\ 1 & 0 \end{bmatrix}$.
%Thus, $1\in I_2(G,X_G)$.

It has been of interest to study graphs whose Smith normal form of its associated matrix (say Laplacian matrix or adjacency matrix) has a particular number of invariant factors equal to 1.
This is because this number is related to the cyclicity of the group obtained from cokernel of the matrix.
Let $\phi_\mathcal{R}(G)$ denote the number of invariant factors over a p.i.d. $\mathcal{R}$ of the distance matrix of $G$ equal to 1.

%Similar arguments as those in \cite[Proposition 3.6]{CV} give us the following result.
The following observation will give us the relation between the Smith normal form of the distance matrix and the distance ideals over a p.i.d.
%For an account on the invariant factors over a PID we refer the reader to \cite{jacobson}.
\begin{proposition}\label{teo:eval1}
Let $\mathcal{R}$ be a p.i.d. and ${\bf d}\in \mathcal{R}^{V(G)}$.
If $f_1\mid\cdots\mid f_{r}$ are the invariant factors of the matrix $D(G,{\bf d})$ over $\mathcal{R}$, then
\[
I^{\mathcal{R}}_i(G,{\bf d})=\left\langle \prod_{j=1}^{i} f_j \right\rangle\text{ for all }1\leq i\leq r.
\]
\end{proposition}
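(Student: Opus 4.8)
The plan is to recognize the statement as the classical relationship between determinantal ideals and the Smith normal form, specialized to the evaluated matrix $M := D(G,{\bf d})$, and to prove it by reducing $M$ to its Smith normal form while tracking the determinantal ideals. First I would record what $I^{\mathcal{R}}_i(G,{\bf d})$ means after evaluation: it is exactly the ideal of $\mathcal{R}$ generated by the $i\times i$ minors of $M$, i.e. the $i$-th determinantal ideal of $M$. Since $\mathcal{R}$ is a p.i.d., this ideal is principal, generated by the greatest common divisor $\Delta_i$ of those minors; so the whole proposition amounts to the identity $\langle \Delta_i\rangle = \langle \prod_{j=1}^{i} f_j\rangle$, that is, $\Delta_i$ and $\prod_{j=1}^{i} f_j$ are associates in $\mathcal{R}$.

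The key tool is the invariance of determinantal ideals under multiplication by invertible matrices. By definition of the Smith normal form there exist $U,V\in \mathrm{GL}_n(\mathcal{R})$ with $UMV = S$, where $S = \diag(f_1,\dots,f_r,0,\dots,0)$. I would show that $I_i(AM)\subseteq I_i(M)$ for any matrix $A$ via the Cauchy--Binet formula: each $i\times i$ minor of $AM$ is an $\mathcal{R}$-linear combination of the $i\times i$ minors of $M$. Applying this to $A=U$ gives $I_i(UM)\subseteq I_i(M)$, and applying it to $U^{-1}$ gives the reverse inclusion, so $I_i(UM)=I_i(M)$; the analogous argument multiplying on the right by $V$ yields $I_i(M)=I_i(UMV)=I_i(S)$.

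It then remains to compute $I_i(S)$ for the diagonal matrix $S$, which is elementary: every $i\times i$ submatrix of $S$ is again diagonal, so its determinant is either $0$ or a product $f_{j_1}\cdots f_{j_i}$ of $i$ distinct diagonal entries. Because $f_1\mid f_2\mid\cdots\mid f_r$, the product $f_1\cdots f_i$ divides every such product, and it itself occurs among them; hence it is a gcd of all $i\times i$ minors and $I_i(S)=\langle f_1\cdots f_i\rangle$. Chaining the two computations gives $I^{\mathcal{R}}_i(G,{\bf d}) = I_i(M) = \langle \prod_{j=1}^{i} f_j\rangle$ for every $1\leq i\leq r$.

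I expect the only genuine content to lie in the invariance step, and specifically in invoking Cauchy--Binet correctly over a commutative ring rather than a field: the inclusion $I_i(AM)\subseteq I_i(M)$ must be justified for arbitrary $A$, and the upgrade to equality for invertible $A$ is precisely what makes the reduction to $S$ legitimate. Everything else---principality of finitely generated ideals in a p.i.d. and the divisibility bookkeeping for the diagonal minors of $S$---is routine.
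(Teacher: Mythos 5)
Your proof is correct, but it is structured quite differently from the paper's. The paper's proof is essentially two lines: in a p.i.d.\ the ideal generated by the $i$-minors of $D(G,{\bf d})$ equals the principal ideal generated by their g.c.d.\ $\Delta^\mathcal{R}_i$, and then it simply \emph{invokes} the classical determinantal-divisor characterization of invariant factors, $f_i=\Delta^\mathcal{R}_i/\Delta^\mathcal{R}_{i-1}$ (stated as known in the paper's introduction), to conclude $\Delta^\mathcal{R}_i=\prod_{j=1}^i f_j$. You instead \emph{prove} that characterization from the definition of the Smith normal form: invariance of determinantal ideals under multiplication by invertible matrices (via Cauchy--Binet, with the inclusion-plus-inverse trick to get equality), followed by the direct computation of the $i$-th determinantal ideal of $\diag(f_1,\dots,f_r,0,\dots,0)$ using the divisibility chain. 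So your argument is self-contained where the paper's is citation-based; what the paper's version buys is brevity, and what yours buys is independence from the quoted fact, which is arguably the right choice in a blind setting. One small slip worth fixing: it is not true that every $i\times i$ submatrix of a diagonal matrix is diagonal (choosing row set $\mathcal{I}$ and column set $\mathcal{J}$ with $\mathcal{I}\neq\mathcal{J}$ gives non-diagonal submatrices); the correct statement is that any such submatrix has determinant $0$ unless $\mathcal{I}=\mathcal{J}$, in which case it is a product of distinct diagonal entries. Since the minors are unchanged, your gcd computation and the conclusion $I_i(S)=\left\langle \prod_{j=1}^{i} f_j \right\rangle$ stand.
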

\begin{proof}
    Let $\Delta^\mathcal{R}_i$ denote the {\it g.c.d.} over $\mathcal{R}$ of ${\rm minors}_i(D(G,{\bf d}))$.
    We have $\langle {\rm minors}_i(D(G,{\bf d}))\rangle = \langle \Delta^\mathcal{R}_i \rangle$.
    Since $f_i=\Delta^\mathcal{R}_i/\Delta^\mathcal{R}_{i-1}$ with $\Delta^\mathcal{R}_0=1$, then $I^{\mathcal{R}}_i(G,{\bf d})=\left\langle \prod_{j=1}^{i} f_j \right\rangle$.
\end{proof}

In this way, %distance ideals can be regarded as a generalization of the invariant factors of the distance matrix.
%In particular, %considering the invariant factors over $\mathbb{Z}$,
to recover the Smith normal form of $D(G)$ from the distance ideals, we just need to evaluate them $X_G$ at ${\bf 0}$.
Moreover, %if the distance ideal $I^\mathbb{Z}_i(G,X_G)$ is trivial, then $\Delta_i(D(G))$ and $f_i$ are equal to $1$.
%Equivalently,
if the $i$-th invariant factor, computed over $\mathcal{R}$, of $D(G,{\bf d})$ is not equal to $1$, then the ideal $I^\mathcal{R}_i(D, X_D)$ is not trivial.
Another consequence of Proposition~\ref{teo:eval1} is the following.
\begin{corollary}
    For any graph $G$, $\Phi_\mathcal{R}(G)\leq \phi_\mathcal{R}(G)$.
    In particular, for any positive integer $k$, the family of graphs with $\Phi_\mathcal{R}(G)\leq k$ contains the family of graphs with $\phi_\mathcal{R}(G)\leq k$.
\end{corollary}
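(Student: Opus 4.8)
The plan is to deduce the corollary directly from Proposition~\ref{teo:eval1} together with the observation that specializing the indeterminates $X_G$ to $\mathbf{0}$ is a unital ring homomorphism $\mathcal{R}[X_G]\to\mathcal{R}$ which carries the generators of $I^\mathcal{R}_i(G,X_G)$, namely the $i\times i$ minors of $D(G,X_G)$, onto the generators of $I^\mathcal{R}_i(G,\mathbf{0})$, the $i\times i$ minors of $D(G)=D(G,\mathbf{0})$. The crux is that such an evaluation can only preserve or create triviality and never destroy it: if $1$ lies in an ideal, its image under a unital ring homomorphism still contains $1$.

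First I would set $i=\Phi_\mathcal{R}(G)$, so that by definition $I^\mathcal{R}_i(G,X_G)=\langle 1\rangle$. Writing $1=\sum_k g_k\,m_k$ with the $m_k$ the $i\times i$ minors of $D(G,X_G)$ and $g_k\in\mathcal{R}[X_G]$, I would evaluate at $X_G=\mathbf{0}$. Since $D(G,\mathbf{0})=D(G)$, each $m_k$ specializes to the corresponding $i\times i$ minor of the ordinary distance matrix, and the identity becomes $1=\sum_k g_k(\mathbf{0})\,m_k(\mathbf{0})$. Hence $1\in I^\mathcal{R}_i(G,\mathbf{0})$, so $I^\mathcal{R}_i(G,\mathbf{0})$ is trivial.

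Next I would invoke Proposition~\ref{teo:eval1} with $\mathbf{d}=\mathbf{0}$: if $f_1\mid\cdots\mid f_r$ are the invariant factors of $D(G)$, then $I^\mathcal{R}_i(G,\mathbf{0})=\langle\prod_{j=1}^i f_j\rangle$. Triviality of this ideal means $\prod_{j=1}^i f_j$ is a unit, and since the $f_j$ are nonzero and a product in a domain is a unit only when each factor is a unit, every one of $f_1,\dots,f_i$ must equal $1$. Thus at least $i$ invariant factors of $D(G)$ equal $1$, that is $i\le\phi_\mathcal{R}(G)$, which yields $\Phi_\mathcal{R}(G)\le\phi_\mathcal{R}(G)$. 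The ``in particular'' assertion is then immediate: if $\phi_\mathcal{R}(G)\le k$ then a fortiori $\Phi_\mathcal{R}(G)\le k$, so $\{G:\phi_\mathcal{R}(G)\le k\}\subseteq\{G:\Phi_\mathcal{R}(G)\le k\}$.

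I do not anticipate a serious obstacle, as the argument is essentially a one-directional specialization. The only point requiring care is to respect the asymmetry: the implication runs from triviality of the generic ideal $I^\mathcal{R}_i(G,X_G)$ to triviality of the specialized ideal $I^\mathcal{R}_i(G,\mathbf{0})$, and not conversely, because evaluating at $\mathbf{0}$ may collapse a nonunit combination of minors into a unit but never the reverse. This is precisely why one obtains the inequality $\Phi_\mathcal{R}\le\phi_\mathcal{R}$ rather than an equality.
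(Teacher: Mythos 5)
Your proof is correct and takes essentially the same route as the paper: specialize the trivial ideal $I^{\mathcal{R}}_{i}(G,X_G)$ (with $i=\Phi_\mathcal{R}(G)$) at $X_G=\mathbf{0}$, note that a unit remains a unit under this evaluation, and then use the relation of Proposition~\ref{teo:eval1} between the evaluated ideal and the invariant factors to conclude $f_1=\cdots=f_i=1$. The paper states this more tersely (triviality of the distance ideal forces $\Delta^\mathcal{R}_i(D(G))=1$, hence the corresponding invariant factors are $1$), but the underlying argument is identical to yours.
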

\begin{proof}
    The inequality follows by observing that if the distance ideal $I^\mathcal{R}_i(G,X_G)$ is trivial, then $\Delta^\mathcal{R}_i(D(G))=1$, and thus the $i$-th invariant factor is equal to $1$.
    Now, let $G$ be a graph with $\phi_\mathcal{R}(G)\leq k$.
    Then by previous equation, $\Phi_\mathcal{R}(G)\leq\phi_\mathcal{R}(G)\leq k$.
\end{proof}

In Section~\ref{section:classification}, we will give some characterizations of graphs with 1 trivial distance ideals.
Meanwhile, it is not difficult to see that the family of graphs with $\phi_\mathcal{R}(G)\leq 1$ consists only of the graph with one vertex.
In fact, there is no graph with $\phi_\mathcal{R}(G)=1$.

\subsection{Varieties of distance ideals}

Let $I\subseteq \mathcal{R}[X]$ be an ideal in $\mathcal{R}[X]$.
The variety associated to the ideal $I$ is
\[
V_\mathcal{R}(I)=\left\{ {\bf a}\in \mathcal{R}^n : g({\bf a}) = 0 \text{ for all } g\in I \right\}.
\]
%set of mutual roots of all polynomials in $I$.
%It is known \cite[Proposition 4]{clo} that if $f_1, ..., f_s$ and $g_1, ..., g_t$ are two different bases of the same ideal, then $V_\mathcal{R}(f_1, ..., f_s)=V_\mathcal{R}(g_1, ..., g_t)$.
%I don't think this is really necessary to include; I think it's considered common knowledge.
Note that if $I$ is trivial, then $V_\mathcal{R}(I)=\emptyset$.

Let $M$ be an $(i+1)\times (i+1)$-matrix with entries in $\mathcal{R}[X_G]$.
We have
\[
\det M = \sum_{j=1}^{i+1}M_{j,1}\det M[j;1],
\]
where $M_{j,1}$ denotes the $(j,1)$ entry of the matrix and $M[j;1]$ denotes the submatrix of $M$ whose $j$-th row and $1$st column were deleted.
More general, $M[\mathcal{I;J}]$ denote the sumbratix of a matrix $M$ generated by eliminating the rows and columns with indices in $\mathcal{I}$ and $\mathcal{J}$, respectively.
For simplicity, when $\mathcal{I=J}$, we just write $M[\mathcal{I}]$.
This gives that $I^\mathcal{R}_{i+1}(G,X_G)\subseteq I^\mathcal{R}_i(G,X_G)$.
Thus, distance ideals satisfy the condition that
\[
\langle 1\rangle \supseteq I^\mathcal{R}_1(G,X_G) \supseteq \cdots \supseteq I^\mathcal{R}_n(G,X_G) \supseteq \langle 0\rangle.
\]
Therefore
\[
\emptyset=V_\mathcal{R}(\langle 1\rangle) \subseteq V_\mathcal{R}(I^\mathcal{R}_1(G,X_G)) \subseteq \cdots \subseteq V_\mathcal{R}(I^\mathcal{R}_n(G,X_G)) \subseteq V_\mathcal{R}(\langle 0\rangle)=\mathcal{R}^n.
\]

If $V_\mathcal{R}(I^\mathcal{R}_k(G,X_G))\neq\emptyset$ for some $k$, then there exists ${\bf a}\in\mathcal{R}^n$ such that, for all $t \geq k$, $I^\mathcal{R}_{t}(G,{\bf a})=\langle 0\rangle$; that is, all $t$-minors of $D(G,{\bf a})$ have determinant equal to 0.
In particular, these varieties can be regarded as a generalization of the distance spectra of $G$.
Distance spectra of graphs have been widely studied; see for example the recent surveys \cite{AH,SI}.
Let $I^\mathcal{R}_i(G,\lambda)$ denote the $i$-{\it th} distance ideal where each $x_i=\lambda$ for all $i\in[n]$.
Therefore, we have that $I^\mathcal{R}_n(G,-\lambda)=\langle \det(-\lambda {\sf I}_n + D(G))\rangle$, and the variety of this ideal is the negative of the distance spectra of $G$.
In particular, if $\lambda$ is a graph eigenvalue of the distance matrix, then $I^\mathcal{R}_n(G,-\lambda)=\langle 0\rangle$.

\begin{example}
For the complete graph $K_3$ with 3 vertices, the Gr\"obner basis of the second distance ideal $I^\mathbb{R}_2(K_3,X_{K_3})$ is equal to $\langle x_0 - 1, x_1 - 1, x_2 - 1\rangle$, and the third distance ideal $I^\mathbb{R}_3(K_3,X_{K_3})$ is equal to $\langle x_0 x_1 x_2 - x_0 - x_1 - x_2 + 2\rangle$.
The variety of $I^\mathbb{R}_2(K_3,X_{K_3})$ consists only of the vector $(1,1,1)$, but the variety of $I^\mathbb{R}_3(K_3,X_{K_3})$ is more interesting; see Figure~\ref{fig:varietyK3}.
By evaluating, we have that $I^\mathbb{R}_2(K_3,-\lambda)=\langle \lambda+1\rangle$, whose variety consists only of $-1$, and the ideal $I^\mathbb{R}_3(K_3,-\lambda)=\langle \lambda^3-3\lambda-2\rangle$ has variety $V_\mathbb{R}(I^\mathbb{R}_3(K_3,\lambda))=\{2,-1\}$.

\begin{figure}[h!]
    \begin{center}
        \includegraphics[scale=0.5]{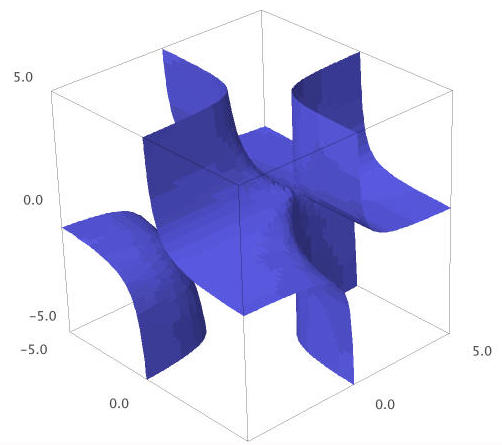}
    \end{center}
    \caption{Partial view of the variety of $I^\mathbb{R}_2(K_3,X_{K_3})$ in $\mathbb{R}^3$.}
    \label{fig:varietyK3}
\end{figure}

\end{example}

%\begin{example}
%{\color{red}Let's put it in a more compact form.}
%Consider the graphs with distance matrices
%\[
%\begin{bmatrix}
%
%0 & 1 & 2 & 3 & 2 & 1 & 1\\
%1 & 0 & 1 & 2 & 3 & 2 & 2\\
%2 & 1 & 0 & 1 & 2 & 2 & 1\\
%3 & 2 & 1 & 0 & 1 & 2 & 2\\
%2 & 3 & 2 & 1 & 0 & 1 & 1\\
%1 & 2 & 2 & 2 & 1 & 0 & 2\\
%1 & 2 & 1 & 2 & 1 & 2 & 0\\
%
%\end{bmatrix}
%\]
%and
%\[
%\begin{bmatrix}
%0 & 1 & 2 & 3 & 4\\
%1 & 0 & 1 & 2 & 3\\
%2 & 1 & 0 & 1 & 2\\
%3 & 2 & 1 & 0 & 1\\
%4 & 3 & 2 & 1 & 0\\%
%\end{bmatrix}
%\]
%The first has distance group isomorphic to $\Z/2\times \Z/6$ and the second has distance group isomorphic to %$\Z/2 \times \Z/2 \times \Z/8$.
%The second graph is obtained from the first by deleting vertices $v_5$ and $v_7$.
%\end{example}

\subsection{Distance ideals of induced subgraphs}

In general, distance ideals are not monotone under taking induced subgraphs.
A counterexample can be constructed, for example, from $P_5$ considered as induced subgraph of $C_6$, since the distance of the leaves of $P_5$ in $C_6$ is 2.
However, we have the following result:

\begin{lemma}\label{lemma:inducemonotone}
Let $H$ be an induced subgraph of $G$ such that %the distance between every pair  of vertices in $H$ does not change after adding the rest of vertices and edges of $G$.  (That is,
for every pair of vertices $v_i,v_j$ in $V(H)$, there is a shortest path from $v_i$ to $v_j$ in $G$ which lies entirely in $H$.
Then $I^\mathcal{R}_i(H,X_H)\subseteq I^\mathcal{R}_i(G,X_G)$  for all $i\leq |V(H)|$ and $\Phi_\mathcal{R}(H)\leq \Phi_\mathcal{R}(G)$.
\end{lemma}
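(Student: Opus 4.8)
The plan is to reduce the inclusion of ideals to the single structural fact that $D(H,X_H)$ appears as an honest principal submatrix of $D(G,X_G)$. First I would unpack the hypothesis: if for every pair $v_i,v_j\in V(H)$ there is a shortest $G$-path lying entirely in $H$, then $d_H(v_i,v_j)=d_G(v_i,v_j)$. Indeed, such a path realizes the $G$-distance while staying inside $H$, giving $d_H\le d_G$, and conversely any $H$-path is a $G$-path, giving $d_G\le d_H$. In other words, the hypothesis says precisely that $H$ is isometrically embedded in $G$.

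With this in hand I would identify the matrices. Since $V(H)\subseteq V(G)$, the variable sets satisfy $X_H\subseteq X_G$, so $\mathcal{R}[X_H]\subseteq\mathcal{R}[X_G]$. Consider the submatrix $D(G,X_G)[V(G)\setminus V(H)]$, that is, the principal submatrix indexed by the rows and columns lying in $V(H)$. Its off-diagonal $(v_i,v_j)$-entry is $d_G(v_i,v_j)=d_H(v_i,v_j)$ by the isometry, and its diagonal entry at $v_i$ is the indeterminate $x_{v_i}$, which is the same variable used in $D(H,X_H)$. Hence this submatrix equals $D(H,X_H)$ on the nose. Consequently every $i\times i$ minor of $D(H,X_H)$ is an $i\times i$ minor of $D(G,X_G)$, hence one of the generators of $I^\mathcal{R}_i(G,X_G)$; since $I^\mathcal{R}_i(H,X_H)$ is generated by precisely these minors (viewed in $\mathcal{R}[X_G]$), we obtain $I^\mathcal{R}_i(H,X_H)\subseteq I^\mathcal{R}_i(G,X_G)$ for every $i\le|V(H)|$.

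For the inequality $\Phi_\mathcal{R}(H)\le\Phi_\mathcal{R}(G)$ I would show that triviality propagates upward along this inclusion. Set $k=\Phi_\mathcal{R}(H)$ and note that $k\le|V(H)|$, so the inclusion applies with $i=k$. By definition $I^\mathcal{R}_k(H,X_H)=\langle 1\rangle$, and the inclusion forces $\langle 1\rangle\subseteq I^\mathcal{R}_k(G,X_G)$, so $I^\mathcal{R}_k(G,X_G)=\langle 1\rangle$ is trivial; hence $\Phi_\mathcal{R}(G)\ge k=\Phi_\mathcal{R}(H)$.

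I do not anticipate a serious obstacle, as the content lies entirely in the bookkeeping. The one point that deserves care is the matching of indeterminates on the diagonal: one must use the common variables $\{x_v:v\in V(H)\}$ for $H$, so that the submatrix of $D(G,X_G)$ is literally $D(H,X_H)$ rather than a relabelled copy. Once that identification is secured, both conclusions follow formally from the submatrix relation.
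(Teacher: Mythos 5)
Your proof is correct and follows essentially the same route as the paper's: both rest on the observation that, under the isometry hypothesis, $D(H,X_H)$ is a principal submatrix of $D(G,X_G)$, so every $i$-minor of the former is a generator of $I^\mathcal{R}_i(G,X_G)$, and triviality then propagates to give $\Phi_\mathcal{R}(H)\leq\Phi_\mathcal{R}(G)$. Your write-up simply makes explicit two steps the paper leaves implicit, namely that the hypothesis forces $d_H(v_i,v_j)=d_G(v_i,v_j)$ and that the diagonal indeterminates match.
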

\begin{proof}
%Let $H$ be an induced subgraph of $G$ as in the .
Since any $i\times i$ submatrix of $D(H,X_H)$ is an $i\times i$ submatrix of $D(G,X_G)$, we have $I^\mathcal{R}_i(H,X_H)\subseteq I^\mathcal{R}_i(G,X_G)$  for all $i\leq |V(H)|$.
Thus if $I^\mathcal{R}_i(H,X_H)$ is trivial for some $i$, then $I^\mathcal{R}_i(G,X_G)$ is trivial.
\end{proof}

In particular we have the following.

\begin{lemma}\label{lemma:distance2inducedmonotone}
Let $H$ be an induced subgraph of $G$ with diameter is 2, that is the distance between any pair of vertices in $H$ is at most 2.
Then $I^\mathcal{R}_i(H,X_H)\subseteq I^\mathcal{R}_i(G,X_G)$  for all $i\leq |V(H)|$.
\end{lemma}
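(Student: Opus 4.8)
The plan is to deduce this statement directly from Lemma~\ref{lemma:inducemonotone} by showing that the diameter-$2$ hypothesis forces the path condition appearing there: namely, that for every pair of vertices $v_i, v_j \in V(H)$ there is a shortest path from $v_i$ to $v_j$ in $G$ lying entirely in $H$. Once this is verified, the containment $I^\mathcal{R}_i(H,X_H)\subseteq I^\mathcal{R}_i(G,X_G)$ for all $i\le |V(H)|$ follows immediately from the conclusion of Lemma~\ref{lemma:inducemonotone}.

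First I would fix an arbitrary pair of distinct vertices $v_i, v_j \in V(H)$ and record the general inequality $d_G(v_i,v_j)\le d_H(v_i,v_j)$, which holds because any path in $H$ is also a path in $G$. Since $H$ has diameter $2$, we have $d_H(v_i,v_j)\in\{1,2\}$, and I would split into these two cases. If $d_H(v_i,v_j)=1$, then $v_iv_j\in E(H)$, and this same edge is a shortest path in $G$ (of length $1$) lying in $H$. If $d_H(v_i,v_j)=2$, then there is a vertex $w\in V(H)$ with $v_iw, wv_j\in E(H)$, so the path $v_i\,w\,v_j$ has length $2$ and lies in $H$; it then remains to see that it is shortest in $G$.

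The key point---and the only place where the induced hypothesis is essential---is the claim that in this second case $d_G(v_i,v_j)=2$ as well. Suppose for contradiction that $d_G(v_i,v_j)=1$, that is, $v_iv_j\in E(G)$. Because $H$ is an \emph{induced} subgraph and $v_i,v_j\in V(H)$, the edge $v_iv_j$ would then belong to $E(H)$, contradicting $d_H(v_i,v_j)=2$. Hence $d_G(v_i,v_j)\ge 2$, and combined with the length-$2$ path this gives $d_G(v_i,v_j)=2$, so $v_i\,w\,v_j$ is a shortest path from $v_i$ to $v_j$ in $G$ contained in $H$. In both cases the hypothesis of Lemma~\ref{lemma:inducemonotone} is met for the pair $v_i,v_j$; since the pair was arbitrary, the lemma applies and yields the desired containment. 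I expect the main (modest) obstacle to be precisely the verification that a length-$2$ path in $H$ survives as a geodesic in $G$, that is, that no shortcut edge is created when passing from $H$ to $G$, which is exactly what the induced condition rules out.
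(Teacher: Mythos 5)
Your proof is correct and takes essentially the same route as the paper: the paper states this lemma as an immediate consequence (``in particular'') of Lemma~\ref{lemma:inducemonotone}, and your case analysis---using the induced hypothesis to rule out a shortcut edge $v_iv_j$ when $d_H(v_i,v_j)=2$---is exactly the verification the paper leaves implicit.
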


A related family of graphs, defined in \cite{H77}, are distance-hereditary graphs.
A graph $G$ is {\it distance-hereditary} if for each connected induced subgraph $H$ of $G$ and every pair $u$ and $v$ of vertices in $H$, $d_H(u,v)=d_G(u,v)$.

\begin{proposition}
    Let $G$ be a distance hereditary graph.
    If $H$ is a connected induced subgraph of $G$, then $I^\mathcal{R}_i(H,X_H)\subseteq I^\mathcal{R}_i(G,X_G)$  for all $i\leq |V(H)|$, and $\Phi_\mathcal{R}(H)\leq \Phi_\mathcal{R}(G)$.
\end{proposition}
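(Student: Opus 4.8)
The plan is to reduce this proposition to Lemma~\ref{lemma:inducemonotone}, whose hypothesis asks that every pair of vertices of $H$ admit a shortest path in $G$ lying entirely inside $H$. So the entire task is to verify that a connected induced subgraph of a distance-hereditary graph meets this condition, after which the two desired conclusions are precisely those already delivered by the lemma.

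First I would fix an arbitrary pair of vertices $u,v\in V(H)$. Because $H$ is a connected induced subgraph of $G$, the defining property of distance-hereditary graphs gives $d_H(u,v)=d_G(u,v)$. Next I would choose a shortest $u$--$v$ path $P$ inside $H$; such a path exists and has length $d_H(u,v)$ since $H$ is connected, and by construction it lies entirely in $H$.

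The key step is to observe that $P$ is simultaneously a shortest path in $G$. Indeed, the length of $P$ equals $d_H(u,v)=d_G(u,v)$, so $P$ realizes the $G$-distance between $u$ and $v$ and is therefore a geodesic of $G$ contained in $H$. Since $u$ and $v$ were arbitrary, the hypothesis of Lemma~\ref{lemma:inducemonotone} holds for $H\subseteq G$, and applying that lemma yields both $I^\mathcal{R}_i(H,X_H)\subseteq I^\mathcal{R}_i(G,X_G)$ for all $i\leq |V(H)|$ and $\Phi_\mathcal{R}(H)\leq\Phi_\mathcal{R}(G)$.

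I do not expect a genuine obstacle here: the argument is essentially a formal consequence of the distance-hereditary definition together with Lemma~\ref{lemma:inducemonotone}. The only point deserving care is the logical direction in the key step — one must use that a geodesic of $H$ remains a geodesic of $G$ rather than the reverse. The inequality $d_G(u,v)\leq d_H(u,v)$ is automatic because $H$ is a subgraph, so the distance-hereditary hypothesis is exactly what supplies the nontrivial direction $d_H(u,v)\leq d_G(u,v)$ needed to certify that the chosen $H$-path is short enough to be a shortest path in $G$.
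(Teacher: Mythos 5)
Your proof is correct and follows exactly the route the paper intends: the paper states this proposition without proof, immediately after Lemma~\ref{lemma:inducemonotone}, precisely because the distance-hereditary definition makes every geodesic of a connected induced subgraph $H$ a geodesic of $G$, which is the lemma's hypothesis. Your verification of that hypothesis, including the observation that $d_G(u,v)\leq d_H(u,v)$ is automatic while the reverse inequality is what the distance-hereditary property supplies, is exactly the intended argument.
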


There are other interesting examples not considered in Lemma~\ref{lemma:inducemonotone}.

\begin{example}\label{example:P4inducemonotone}
Let $P_4$ be the path with $V(P_4)=\{v_1,v_2,v_3,v_4\}$ and $E(P_4)=\{v_1v_2, v_2v_3, v_3v_4\}$.  %Then
Let $G$ be a graph containing $P_4$ as induced subgraph.
The only way to reduce the distance between any two vertices in $P_4$ is that $G$ has a vertex adjacent to $v_1$ and $v_4$.
Assume $u\in V(G)$ such that $u$ is adjacent at least with $v_1$ and $v_4$.  Then $D(G,X_G)$ has the following submatrix

%\[
%D(P_4,X_{P_4})=
%\begin{bmatrix}

%x_1 & 1 & 2 & 3\\
%1 & x_2 & 1 & 2\\
%2 & 1 & x_3 & 1\\
%3 & 2 & 1 & x_4\\

%\end{bmatrix}
%\]
%V1:
\[
M=D(G,X_G)[V(P_4)\cup \{u\},V(P_4)\cup \{u\}]=
\begin{bmatrix}

    x_1 & 1 & 2 & 2 & 1\\
    1 & x_2 & 1 & 2 & a\\
    2 & 1 & x_3 & 1 & b\\
    2 & 2 & 1 & x_4 & 1\\
    1 & a & b & 1 & u

\end{bmatrix}
\]

Note that since $\det(M[\{v_2,v_4\},\{v_1,v_3\}])=-1$, we have that $I^\mathcal{R}_2(G,X_{G})$ is trivial.
%V2
%Note that the restriction of the above matrix to rows 1 and 2 and columns 3 and 4 has determinant 1.  Therefore, $I_2(P_4,X_{P_4})$ is trivial.  By Lemma \ref{lemma:distance2inducedmonotone}, any graph containing $P_4$ as an induced subgraph has trivial second distance ideal.
%Note that since $\det(D(P_4,X_{P_4})[\{v_2,v_4\},\{v_1,v_3\}])=-1$, we have that $I_2(P_4,X_{P_4})$ is trivial.
%And since $\det(D(G,X_G)[\{v_2,v_4\},\{v_1,v_3\}])=-1$, we have that $I_2(G,X_{G})$ is trivial.
Therefore, $P_4$ and any graph containing $P_4$ as an induced subgraph have trivial second distance ideal.
\end{example}

\subsection{Distance ideals of complete graphs and star graphs}

Another interpretation of the distance matrix is the following.
Given a connected graph $G$, the \textit{complete multigraph} $\mathcal{K}(G)$ is a multigraph whose underlying simple graph is a complete graph with $V(G)$ as vertex set, and the number of edges between two vertices $u$ and $v$ is $d_G(u,v)$.
Note that the distance matrix of $G$ is equal to the adjacency matrix of the complete multigraph $\mathcal{K}(G)$.
The converse is not always possible.  That is, for an arbitrary complete multigraph, it is not always possible to find a graph whose distance matrix is equal to the adjacency matrix of this complete multigraph.

The torsion part of the cokernel of the adjacency matrix of a graph $G$ is known as the {\it Smith group} of $G$ and is denoted $S(G)$; see \cite{rushanan}.
Another interesting group is the {\it critical group} which is computed through the Smith normal form of the Laplacian matrix of $G$; see \cite{merino}.
In this way, by computing the Smith normal form of the distance matrix of a graph $G$, we are also computing the Smith group of $\mathcal{K}(G)$.
Furthermore, the critical ideals of a complete multigraph $\mathcal{K}(G)$ coincide with the distance ideals of $G$ evaluated at $-x_v$ %=-x_v$
for all $v\in V(G)$.
The {\it generalized Laplacian matrix} $L(G,X_G)$ of $G$ is the matrix $\diag(X_G)-A(G)$, where $A(G)$ is the adjacency matrix of $G$.
The {\it critical ideals} of $G$ are the ideals $\langle minors_i(L(G,X_G)) \rangle$ for $i\in [n]$.
These ideals were defined in \cite{CV} and further studied in \cite{alfacorrval,AVV,alfalin,AV,AV1}, from which our study was originally inspired.

%The Smith normal forms of the distance matrices of $K_{m,n}$ and $K_{n}$ were completely determined in \cite{HW}.
%We finish this section by giving the distance ideals of $K_{n}$ and $K_{m,n}$, generalizing the previous results.
We finish this section by giving a description of the distance ideals of the complete graphs and the star graphs.
In what follows $\mathcal{R}$ will be a commutative ring containing the integers.
%However, some results could work over more general commutative rings.

The only case when $G$ and $\mathcal{K}(G)$ are the same is when $G$ is the complete graph.
Therefore for this case, distance ideals and critical ideals are similar.
Since the description of the distance ideals of the complete graph will be used later, we give this description.

\begin{remark}
    In the following, we are going to consider $\prod_{\emptyset}=1$.
\end{remark}

\begin{theorem}\cite[Proposition 3.15 and Theorem 3.16]{CV}\label{distanceidealscompletegraphs}
    The $i$-th distance ideal of the complete graph $K_n$ with $n$ vertices is generated by
    %\[
    %\begin{cases}
        %\prod_{j=1}^n(x_j-1) + \sum_{k=1}^n\prod_{j\neq k}(x_j-1) & \text{if } i = n,\\
        %\left\{ \prod_{j\in I}(x_j-1) : I \subset [n] \text{ and } |I|=i-1 \right\} & \text{if } i < n.
    %\end{cases}
    %\]
     \[
    \begin{cases}
        \prod_{j=1}^n(x_j-1) + \sum_{k=1}^n\prod_{j\neq k}(x_j-1) & \text{if } i = n,\\
         \left\{\prod_{j\in \mathcal{I}}(x_j-1) : \mathcal{I} \subset [n] \text{ and } |\mathcal{I}|=i-1\right\} & \text{if } i < n.
    \end{cases}
    \]
\end{theorem}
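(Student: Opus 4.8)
The plan is to work throughout with the abbreviation $y_j = x_j - 1$, under which the generalized distance matrix becomes $M := D(K_n,X_{K_n}) = \diag(y_1,\dots,y_n) + J$, where $J$ is the all-ones $n\times n$ matrix (every off-diagonal entry of $M$ is $1$, every diagonal entry is $x_j = y_j+1$). The claimed generators are then the products $\prod_{j\in\mathcal{I}}y_j$ for $i<n$ and the single determinant for $i=n$; since $y_j=x_j-1$ is merely notation, the minors are still the honest polynomials in the $x_j$ and no change of ring is involved.

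For $i=n$ there is a single minor, so it suffices to evaluate $\det M$. Because $J=\mathbf{1}\mathbf{1}^T$ is the rank-one matrix built from the all-ones vector $\mathbf{1}$, the matrix-determinant lemma, read as a polynomial identity over $\mathcal{R}[X_{K_n}]$, gives $\det(\diag(y)+\mathbf{1}\mathbf{1}^T) = \prod_{j=1}^n y_j + \sum_{k=1}^n \prod_{j\ne k} y_j$, which is exactly the stated generator.

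For $i<n$ I would prove the two inclusions separately. To see that each product $\prod_{j\in\mathcal{I}}y_j$ with $|\mathcal{I}|=i-1$ is itself a minor, I would pick two distinct indices $k\ne l$ lying outside $\mathcal{I}$ (possible precisely because $i<n$) and take the $i\times i$ submatrix $M[\mathcal{I}\cup\{k\},\mathcal{I}\cup\{l\}]$. Since $k$ indexes a row disjoint from the chosen columns and $l$ a column disjoint from the chosen rows, that last row and last column are all-ones; subtracting the last column from the first $i-1$ columns clears the off-diagonal ones in the $\mathcal{I}\times\mathcal{I}$ block and leaves an upper-triangular matrix of determinant $\prod_{j\in\mathcal{I}}y_j$. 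For the reverse inclusion, I would write a general submatrix as $M[R,C]=J+E$, where $E$ carries the entry $y_r$ at each position whose row and column index coincide, so $E$ is a partial permutation matrix supported on $R\cap C$ with at most one nonzero entry per row and per column. Expanding $\det(J+E)$ multilinearly in the columns and discarding every term that uses two all-ones columns (which are identical, hence contribute $0$), only $\det E$ and the $i$ terms replacing a single column of $E$ by $\mathbf{1}$ survive; a short analysis then shows each surviving nonzero term equals $\pm\prod_{j\in\mathcal{J}}y_j$ for some $\mathcal{J}\subseteq R\cap C$ with $|\mathcal{J}|=i-1$ (or, for $\det E$ in the case $R=C$, a product over $i$ indices, which is a multiple of such a generator). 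Hence every minor lies in the claimed ideal, giving equality.

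The routine part is the rank-one determinant evaluation; the main obstacle is the bookkeeping in the reverse inclusion, namely identifying exactly which transversals of $J+E$ contribute and verifying that every surviving term is a product over a set of size at least $i-1$ drawn from $R\cap C$. The cleanest way to organize this is to note that replacing column $b$ of $E$ by $\mathbf{1}$ yields a nonzero determinant only when the remaining $i-1$ columns of $E$ are all nonzero, i.e. when $C\setminus\{c_b\}\subseteq R$, in which case those columns force a unique transversal and the all-ones column supplies the single missing row with a $1$.
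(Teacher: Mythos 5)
Your argument is correct, but it is not the paper's argument. The paper never proves Theorem~\ref{distanceidealscompletegraphs} directly: it cites \cite{CV}, and then separately establishes a generalization, Propositions~\ref{propo:detcompletemultipartite} and~\ref{prop:detgencompletegraph}, for the matrix $\diag(X_n)-m{\sf I}_n+m{\sf J}_n$ with arbitrary $m$; the complete graph is the case $m=1$, where the generators in $B_k$ collapse into $\langle A_k\rangle$ and one recovers exactly the stated generating sets. Within that framework the two key steps are done differently from yours: the determinant formula is proved by induction via cofactor expansion (not by the matrix determinant lemma, though your appeal to it as a polynomial identity over $\mathbb{Z}[y_1,\dots,y_n]$ is legitimate), and an arbitrary $k\times k$ submatrix is analyzed by observing that, up to row and column permutations, it equals a principal submatrix $M_n(m)[\mathcal{I}]$ with the variables in some $\mathcal{J}\subseteq\mathcal{I}$ specialized to $m$ --- whence $|\mathcal{J}|\geq 2$ gives two equal rows and determinant $0$, $|\mathcal{J}|=1$ gives $\pm m\prod_{i\in\mathcal{I}\setminus\mathcal{J}}(x_i-m)$, and $|\mathcal{J}|=0$ gives the full determinant --- rather than by your multilinear expansion of $\det(J+E)$ and the bookkeeping of surviving transversals. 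The trade-off: the paper's specialization trick reuses the determinant formula wholesale and, being stated for general $m$, is recycled later for the star graphs (Theorem~\ref{teo:detK1m}); your expansion is self-contained, re-derives the $i=n$ formula in the same stroke, and your triangularization argument exhibits each product $\prod_{j\in\mathcal{I}}y_j$ as literally equal to a minor, a point the paper dispatches with ``an appropriate selection of the indices.''
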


Following Proposition~\ref{teo:eval1}, by evaluating the distance ideals at $x_v=0$ for each $v\in V$, we obtain the Smith normal form of distance matrix over the integers of the complete graph.

\begin{corollary}
    The Smith normal form of the distance matrix of the complete graph with $n$ vertices is ${\sf I}_{n-1}\oplus (n-1)$.
\end{corollary}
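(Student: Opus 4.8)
The plan is to read off the Smith normal form directly from the description of the distance ideals of $K_n$ given in Theorem~\ref{distanceidealscompletegraphs}, combined with the evaluation principle of Proposition~\ref{teo:eval1}. Since the remark preceding Proposition~\ref{teo:eval1} tells us that evaluating $X_G$ at $\mathbf 0$ recovers the Smith normal form of $D(G)$, I would simply specialize the generators listed in Theorem~\ref{distanceidealscompletegraphs} to $x_v=0$ for every $v\in V(K_n)$ and identify the resulting principal ideals.

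First I would treat the case $i<n$. Each generator has the form $\prod_{j\in\mathcal I}(x_j-1)$ with $|\mathcal I|=i-1$, so evaluating at $\mathbf 0$ turns it into $\prod_{j\in\mathcal I}(-1)=(-1)^{i-1}$, a unit in $\Z$. Hence $I^{\Z}_i(K_n,\mathbf 0)=\langle 1\rangle$ for all $i<n$. (For $i=1$ this is consistent with the convention $\prod_\emptyset=1$ from the preceding remark.) Next I would handle $i=n$: the single generator
\[
\prod_{j=1}^n(x_j-1)+\sum_{k=1}^n\prod_{j\neq k}(x_j-1)
\]
evaluates at $\mathbf 0$ to $(-1)^n+n(-1)^{n-1}=(-1)^{n-1}(n-1)$, so $I^{\Z}_n(K_n,\mathbf 0)=\langle n-1\rangle$.

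Now I would invoke Proposition~\ref{teo:eval1}, which gives $I^{\Z}_i(K_n,\mathbf 0)=\langle\prod_{j=1}^i f_j\rangle$ where $f_1\mid\cdots\mid f_n$ are the invariant factors of $D(K_n)=D(K_n,\mathbf 0)$. From $\prod_{j=1}^i f_j=1$ for $i<n$ I conclude $f_1=\cdots=f_{n-1}=1$, and from $\prod_{j=1}^n f_j=n-1$ I conclude $f_n=n-1$. Therefore the invariant factors are $1,\dots,1,n-1$, which is exactly the Smith normal form ${\sf I}_{n-1}\oplus(n-1)$.

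Because every step is a direct substitution into already-established results, there is no genuine obstacle; the only point requiring care is the sign bookkeeping in the $i=n$ generator and the use of the $\prod_\emptyset=1$ convention at $i=1$, but since ideals are insensitive to multiplication by units these never affect the conclusion.
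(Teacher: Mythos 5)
Your proof is correct and follows essentially the same route as the paper: evaluate the generators from Theorem~\ref{distanceidealscompletegraphs} at $X_G=\mathbf{0}$, note that for $i<n$ they become units while for $i=n$ the generator becomes $(-1)^n+n(-1)^{n-1}=\pm(n-1)$, and then read off the invariant factors via Proposition~\ref{teo:eval1}. The paper phrases this more tersely in terms of the minor g.c.d.'s $\Delta_i$, but the argument is the same, and your extra care with signs and the $\prod_\emptyset=1$ convention is harmless.
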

\begin{proof}
    After the evaluation, we have $\Delta_i=1$, for $i\in [n-1]$.
    And $\Delta_n=|(-1)^n+n(-1)^{n-1}|=n-1$.
\end{proof}

Furthermore, the Smith normal form of other variants of the distance matrix can be computed from Theorem~\ref{distanceidealscompletegraphs}.
Let $tr(u)$ denote {\it transmission} of a vertex $u$, which is defined as $\sum_{v\in V}d_G(u,v)$.
The distance Laplacian matrix is defined as $-D(G,X_G)|_{x_u=-tr(u)}$.
Thus by evaluating the distance ideals at $x_v=-n+1$ for each $v\in V$ we can obtain the Smith normal form of distance Laplacian matrix of the complete graph.
As explained before, this case coincides with the invariant factors of the critical group of the complete graph.

\begin{corollary}
    The Smith normal form of the distance Laplacian matrix of the complete graph with $n$ vertices is $1\oplus n{\sf I}_{n-2}\oplus 0$.
\end{corollary}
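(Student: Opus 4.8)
The plan is to mimic the proof of the preceding corollary, reducing the computation to evaluating the distance ideals from Theorem~\ref{distanceidealscompletegraphs} at the prescribed point and then reading off the invariant factors via Proposition~\ref{teo:eval1}. First I would observe that, since $-1$ is a unit in $\mathcal{R}$, the Smith normal form of the distance Laplacian matrix $-D(K_n,X_{K_n})|_{x_u=-tr(u)}$ coincides with that of $D(K_n,{\bf d})$, where ${\bf d}$ is the constant vector with every entry equal to $-tr(u)$. For the complete graph every vertex has transmission $tr(u)=\sum_{v\in V}d_{K_n}(u,v)=n-1$, so ${\bf d}=(-(n-1),\dots,-(n-1))$ and the substitution to perform is $x_j\mapsto -(n-1)$, equivalently $x_j-1\mapsto -n$.

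Next I would substitute this value into the generators listed in Theorem~\ref{distanceidealscompletegraphs}. For $i<n$ the $i$-th distance ideal is generated by the products $\prod_{j\in\mathcal{I}}(x_j-1)$ over all $(i-1)$-subsets $\mathcal{I}\subseteq[n]$; after the substitution each such product becomes $(-n)^{i-1}$, so $\Delta_i=n^{i-1}$. In particular $\Delta_1=1$, using the convention $\prod_\emptyset=1$. By Proposition~\ref{teo:eval1} the invariant factors then satisfy $\prod_{j=1}^i f_j=n^{i-1}$, which telescopes to give $f_1=1$ and $f_2=\cdots=f_{n-1}=n$.

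For the top case $i=n$ I would evaluate the single generator $\prod_{j=1}^n(x_j-1)+\sum_{k=1}^n\prod_{j\neq k}(x_j-1)$. The first term becomes $(-n)^n$, while the sum contributes $n\cdot(-n)^{n-1}$, and these cancel to give $\Delta_n=0$; hence $f_n=0$. Collecting the factors yields the invariant factors $1,\underbrace{n,\dots,n}_{n-2},0$, which is exactly $1\oplus n{\sf I}_{n-2}\oplus 0$.

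The only delicate point is this last cancellation in the $n$-th ideal: one must track the parity of the exponents carefully, since $(-n)^n+n\cdot(-n)^{n-1}=n^n\bigl((-1)^n+(-1)^{n-1}\bigr)=0$ holds precisely because the two sign factors are opposite. Everything else is a routine telescoping of the $\Delta_i$, identical in spirit to the computation in the previous corollary.
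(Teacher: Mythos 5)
Your proof is correct and follows exactly the route the paper intends: the paper states this corollary without a written proof, but the surrounding text (and the proof of the preceding corollary for the distance matrix) prescribes precisely your method of evaluating the ideals of Theorem~\ref{distanceidealscompletegraphs} at $x_v=-(n-1)$ and reading off the invariant factors via Proposition~\ref{teo:eval1}. Your computation of $\Delta_i=n^{i-1}$ for $i<n$, the cancellation giving $\Delta_n=0$, and the telescoping to $f_1=1$, $f_2=\cdots=f_{n-1}=n$, $f_n=0$ are all accurate.
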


Now, let us compute the distance ideals of the star graphs.
For this, we first give a more general result than Theorem~\ref{distanceidealscompletegraphs}.

\begin{proposition}\label{propo:detcompletemultipartite}
    Let $M_n(m)=\diag(X_n)-m{\sf I}_n+m{\sf J}_n$.
    Then
    \[
        \det(M_n(m))=\prod_{i=1}^n(x_i-m)+m\sum_{i=1}^n\prod_{j\neq i}(x_j-m)
    \]
\end{proposition}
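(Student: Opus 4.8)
The plan is to recognize $M_n(m)$ as a rank-one perturbation of a diagonal matrix and then exploit multilinearity of the determinant in the rows. First I would note that $M_n(m)$ has $x_i$ in the $i$-th diagonal entry and $m$ in every off-diagonal position, so that $M_n(m)=\diag(x_1-m,\dots,x_n-m)+m{\sf J}_n$, where ${\sf J}_n$ is the all-ones matrix. Writing the all-ones row vector as $\mathbf{1}^\top$, the $i$-th row of $M_n(m)$ decomposes as $m\mathbf{1}^\top+(x_i-m)e_i^\top$, where $e_i$ is the $i$-th standard basis vector. This is the decomposition I will feed into the multilinear expansion, and the whole argument then works over an arbitrary commutative ring, so no invertibility hypotheses are needed.

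Next I would expand $\det M_n(m)$ by multilinearity in the rows using this two-term decomposition of each row. This produces a sum of $2^n$ determinants, one for each choice, per row, of the ``constant part'' $m\mathbf{1}^\top$ or the ``diagonal part'' $(x_i-m)e_i^\top$. The key observation is that any term in which two or more rows are assigned their constant part contains two identical rows, both equal to $m\mathbf{1}^\top$, and therefore vanishes. Hence only two kinds of term survive: the single term in which every row takes its diagonal part, and the $n$ terms in which exactly one row, say row $k$, takes its constant part while all others take their diagonal part.

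It then remains to evaluate the surviving terms. The all-diagonal term is $\det\diag(x_1-m,\dots,x_n-m)=\prod_{i=1}^n(x_i-m)$. For the term with constant row $k$, pulling the scalar $m$ out of row $k$ and the scalar $(x_j-m)$ out of each row $j\neq k$ leaves the determinant of the matrix whose row $k$ is $\mathbf{1}^\top$ and whose every other row $j$ is $e_j^\top$; a Laplace expansion along row $k$ shows this residual determinant equals $1$, so the term equals $m\prod_{j\neq k}(x_j-m)$. Summing the all-diagonal term and the $n$ single-constant-row terms yields exactly $\prod_{i=1}^n(x_i-m)+m\sum_{k=1}^n\prod_{j\neq k}(x_j-m)$, as claimed.

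The only delicate point is the bookkeeping in the single-constant-row computation, namely confirming that the residual determinant is exactly $1$ and tracking the scalar factors correctly; everything else is forced by the vanishing of the repeated-row terms. An alternative route would be the matrix determinant lemma, $\det(D+m\mathbf{1}\mathbf{1}^\top)=\det(D)\,(1+m\,\mathbf{1}^\top D^{-1}\mathbf{1})$ with $D=\diag(x_1-m,\dots,x_n-m)$, which reproduces the formula immediately after simplifying $\mathbf{1}^\top D^{-1}\mathbf{1}=\sum_{i=1}^n(x_i-m)^{-1}$; however this requires $D$ to be invertible, so one would have to pass to the fraction field of $\Z[x_1,\dots,x_n,m]$ and then invoke universality of the resulting polynomial identity. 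I prefer the multilinear argument precisely because it is purely polynomial and sidesteps this localization.
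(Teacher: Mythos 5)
Your proof is correct, but it takes a genuinely different route from the paper's. The paper argues by induction on $n$: it expands $\det(M_{n+1}(m))$ along the last row, recognizes the resulting cofactors as (up to sign) determinants of $M_n(m)$ with one variable evaluated at $m$, and applies the inductive hypothesis together with the observation that substituting $x_i=m$ into the claimed formula kills all but one summand. You instead decompose each row as $m\mathbf{1}^\top+(x_i-m)e_i^\top$, expand $\det M_n(m)$ by multilinearity into $2^n$ terms, and discard every term with two or more ``constant'' rows since such a determinant has two equal rows; the survivors are exactly the all-diagonal term $\prod_i(x_i-m)$ and the $n$ single-constant-row terms $m\prod_{j\neq k}(x_j-m)$, whose residual determinant you correctly verify to be $1$. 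What your approach buys: it is induction-free, a single global computation valid over any commutative ring, and it exposes the structural reason the formula holds (a rank-one perturbation of a diagonal matrix); your remark about why the matrix-determinant-lemma shortcut requires passing to a fraction field and invoking universality is also accurate. What the paper's approach buys: its inductive bookkeeping---cofactor expansion with sign-tracking and evaluation of variables at $m$---is exactly the machinery reused in the subsequent results (Proposition~\ref{prop:detgencompletegraph} and Theorem~\ref{teo:detK1m}), so the induction doubles as setup for what follows. Both arguments are sound.
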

\begin{proof}
    For $n=2$, the result follows since $(x_1-m)(x-2-m)+m(x_1-m+x_2-m)=x_1x_2-m^2$.
    Assume $n\geq 2$.
    For simplicity, $M_n$ will denote $M_n(m)$.
    \begin{eqnarray*}
        \det(M_{n+1}) & = & x_{n+1}\cdot\det(M_n) + \sum_{i=1}^n(-1)^{(n+1)+i+(n-i)}\cdot m\cdot\det(M_n|_{x_i=m}) \\
         & = & x_{n+1}\left( \prod_{i=1}^n(x_i-m)+m\sum_{i=1}^n\prod_{j\neq i}(x_j-m)\right)\\
         & & -m\sum_{i=1}^n\left.\left( \prod_{k=1}^n(x_k-m)+m\sum_{k=1}^n\prod_{j\neq k}(x_j-m)\right)\right|_{x_i=m}\\
         & = & x_{n+1} \prod_{i=1}^n(x_i-m) + m\cdot(x_{n+1}-m)\sum_{i=1}^n\prod_{j\neq i}(x_j-m)\\
         & = & \prod_{i=1}^{n+1}(x_i-m)+m\sum_{i=1}^{n+1}\prod_{j\neq i}(x_j-m)
    \end{eqnarray*}
\end{proof}

Thus we have the following result.

\begin{proposition}\label{prop:detgencompletegraph}
    Let $M_n(m)=\diag(X_n)-m{\sf I}_n+m{\sf J}_n$.
    Let
    \[
    A_k=\left\{ m\prod_{i\in \mathcal{I}}(x_i-m) : \mathcal{I}\subset [n] \text{ and } |\mathcal{I}| = k-1\right\}
    \]
    and
    \[
    B_k=\left\{ \prod_{i\in \mathcal{I}}(x_i-m)-m\sum_{i\in \mathcal{I}}\prod_{j\in \mathcal{I}\setminus{i}}(x_j-m) : \mathcal{I}\subset [n] \text{ and } |\mathcal{I}| = k  \right\}.
    \]
    Then, for $k\in [n-1]$, $\left\langle {\rm minors}_k(M_n(m)) \right\rangle$ is equal to $\langle A_k\cup B_k\rangle$.
%     \begin{eqnarray*}
%         \left\langle\left\{ m\prod_{i\in \mathcal{I}}(x_i-m) : \mathcal{I}\subset [n] \text{ and } |\mathcal{I}| = k-1\right\}\cup \right.\\
%         \left. \left\{ \prod_{i\in \mathcal{I}}(x_i-m)-m\sum_{i\in \mathcal{I}}\prod_{j\in \mathcal{I}\setminus{i}}(x_j-m) : \mathcal{I}\subset [n] \text{ and } |\mathcal{I}| = k  \right\}\right\rangle.
%     \end{eqnarray*}
\end{proposition}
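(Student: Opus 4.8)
The plan is to enumerate the $k\times k$ submatrices of $M_n(m)$ according to how their row-index set $\mathcal{I}$ and column-index set $\mathcal{J}$ (each of size $k$) overlap, evaluate each minor explicitly, and then show that the resulting polynomials span the same ideal as $A_k\cup B_k$. Write $S=\mathcal{I}\cap\mathcal{J}$ and $t=|\mathcal{I}\setminus\mathcal{J}|=|\mathcal{J}\setminus\mathcal{I}|$ (equal since $|\mathcal{I}|=|\mathcal{J}|=k$). The structural observation driving everything is that $M_n(m)$ carries the $x_i$ on the diagonal and $m$ in every off-diagonal position, so a row indexed by $i\in\mathcal{I}\setminus\mathcal{J}$ meets only columns $j\ne i$ and is therefore the constant row $(m,\dots,m)$; dually, every column indexed by $\mathcal{J}\setminus\mathcal{I}$ is the constant column $(m,\dots,m)^{\mathsf T}$. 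First I would dispose of $t\ge 2$: there are then at least two equal constant-$m$ rows, so the minor is $0$ and contributes nothing.

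The two remaining cases produce the generators. When $t=0$ the submatrix is the principal submatrix indexed by $\mathcal{I}$, which is a copy of $M_k(m)$, so by Proposition~\ref{propo:detcompletemultipartite} its determinant is $P_\mathcal{I}:=\prod_{i\in\mathcal{I}}(x_i-m)+m\sum_{i\in\mathcal{I}}\prod_{j\in\mathcal{I}\setminus\{i\}}(x_j-m)$, as $\mathcal{I}$ ranges over all $k$-subsets. When $t=1$, say $\mathcal{I}=S\cup\{r\}$ and $\mathcal{J}=S\cup\{c\}$ with $r\ne c$ and $|S|=k-1$, the submatrix has one constant-$m$ row and one constant-$m$ column; factoring $m$ out of that row and subtracting the $c$-column from each $S$-column collapses it to a matrix whose determinant is immediately $\pm m\prod_{i\in S}(x_i-m)$, a value depending only on $S$ and not on the auxiliary indices $r,c$. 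These are precisely the elements of $A_k$, and the hypothesis $k\le n-1$ is exactly what guarantees that for every $(k-1)$-subset $S$ there are two distinct indices $r,c\notin S$, so that every element of $A_k$ is actually realized as a minor.

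At this stage $\langle {\rm minors}_k(M_n(m))\rangle=\langle A_k\cup\{P_\mathcal{I}:|\mathcal{I}|=k\}\rangle$. The final step is to trade each $P_\mathcal{I}$ for the corresponding generator $B_\mathcal{I}:=\prod_{i\in\mathcal{I}}(x_i-m)-m\sum_{i\in\mathcal{I}}\prod_{j\in\mathcal{I}\setminus\{i\}}(x_j-m)$ of $B_k$. Subtracting gives $P_\mathcal{I}-B_\mathcal{I}=2m\sum_{i\in\mathcal{I}}\prod_{j\in\mathcal{I}\setminus\{i\}}(x_j-m)$, and each summand $m\prod_{j\in\mathcal{I}\setminus\{i\}}(x_j-m)$ lies in $A_k$ because $|\mathcal{I}\setminus\{i\}|=k-1$; hence $P_\mathcal{I}\equiv B_\mathcal{I}\pmod{\langle A_k\rangle}$ and replacing the $P_\mathcal{I}$ by the $B_\mathcal{I}$ leaves the ideal unchanged, yielding $\langle{\rm minors}_k(M_n(m))\rangle=\langle A_k\cup B_k\rangle$.

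The hard part will be the $t=1$ determinant evaluation: carrying out the row/column reduction cleanly for general $k$ (rather than on a small example) and verifying that the outcome is independent of $r,c$, while checking that the bound $k\le n-1$ is invoked at exactly the right place. The apparent sign mismatch between the principal minors $P_\mathcal{I}$ and the minus sign in $B_k$ is the other point to handle carefully, but it dissolves once one observes that the difference $P_\mathcal{I}-B_\mathcal{I}$ is $2m\sum\prod$, already contained in $\langle A_k\rangle$.
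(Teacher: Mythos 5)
Your proof is correct and follows essentially the same route as the paper's: classify the $k\times k$ submatrices of $M_n(m)$ by the overlap of their row and column index sets, observe that overlap deficiency at least two forces two constant-$m$ rows and hence a zero minor, and evaluate the two surviving cases via Proposition~\ref{propo:detcompletemultipartite}. One point in your write-up is actually handled more carefully than in the paper: the paper asserts that the principal $k\times k$ minors equal the elements of $B_k$ directly, even though Proposition~\ref{propo:detcompletemultipartite} produces them with a plus sign rather than the minus sign appearing in $B_k$; your explicit observation that $P_{\mathcal{I}}-B_{\mathcal{I}}=2m\sum_{i\in\mathcal{I}}\prod_{j\in\mathcal{I}\setminus\{i\}}(x_j-m)\in\langle A_k\rangle$ (together with noting that $k\le n-1$ is what allows every element of $A_k$ to be realized as a minor) is precisely the missing justification that makes the stated generating set correct.
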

\begin{proof}
    Let $M$ be a $k\times k$ submatrix of $M_n(m)$.
    Then, there exist subsets $\mathcal{I}$ and $\mathcal{J}$ of $[n]$ with $\mathcal{J}\subseteq \mathcal{I}$ and $|\mathcal{I}|=k$ such that $M$ is equivalent to $M_n(m)[\mathcal{I}]|_{\{x_j=m \text{ for all } j \in \mathcal{J}\}}$.
    If $|\mathcal{J}|\geq 2$, then $\det(M)=0$.
    If $|\mathcal{J}|=1$, then, by Proposition~\ref{propo:detcompletemultipartite}, $\det(M)=\pm m\prod_{i\in \mathcal{I}\setminus \mathcal{J}}(x_i-m)$.
    And if $|\mathcal{J}|=0$, then, by Proposition~\ref{propo:detcompletemultipartite}, $\det(M)=\prod_{i\in \mathcal{I}}(x_i-m)-m\sum_{i\in \mathcal{I}}\prod_{j\in \mathcal{I}\setminus{i}}(x_j-m)$.
    Thus, we have one containment.
    The other one follows since by an appropriate selection of the indices $\mathcal{I}$ and $\mathcal{J}$, we can obtain any element in $A_k\cup B_k$ as the determinant of $M_n(m)[\mathcal{I},\mathcal{J}]$.
\end{proof}

\begin{theorem}\label{teo:detK1m}
    Let $m\geq 1$ and
    \[
        M(m)=
        \begin{bmatrix}
        \diag(X_m)-2{\sf I}_m+2{\sf J}_m & {\sf J}_{m,1}\\
        {\sf J}_{1,m} & y\\
        \end{bmatrix}.
    \]
    Then, for $i\in [m]$, $\det(M(m)[m+1,i])$ is equal to
    \begin{equation}
        (-1)^{m-i}\prod\limits^m_{\substack{j=1\\ j\neq i}}(x_j-2)
    \end{equation}
    And, $\det(M(m))$ is equal to
    \begin{equation}
        y\prod\limits_{i=1}^m(x_i-2) + (2y-1)\sum\limits_{i=1}^m\prod\limits^m_{\substack{j=1\\ j\neq i}}(x_j-2)
    \end{equation}
\end{theorem}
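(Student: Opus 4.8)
The plan is to establish the minor identity first and then obtain the determinant formula from it by a single cofactor expansion of $M(m)$ along its last row, feeding in the minor identity together with Proposition~\ref{propo:detcompletemultipartite}.

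For the minor identity, I would work with the $m\times m$ matrix $N:=M(m)[m+1,i]$, whose rows are indexed by the leaves $1,\dots,m$ and whose columns are indexed by the leaves $\{1,\dots,m\}\setminus\{i\}$ together with the center column $m+1$. The center column is the all-ones vector ${\sf J}_{m,1}$, while a leaf column $c$ (with $c\neq i$) has the off-diagonal value $2$ in every row except row $c$, where it carries $x_c$. The key simplification is a determinant-preserving sequence of column operations: subtract twice the center column from each leaf column. Since the center column is all ones, this turns leaf column $c$ into the vector with $x_c-2$ in row $c$ and $0$ elsewhere, while row $i$ (which is off-diagonal in every surviving leaf column) becomes zero in all leaf columns and keeps its single entry $1$ in the center column. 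Expanding along row $i$ then leaves a diagonal $(m-1)\times(m-1)$ matrix with diagonal entries $x_j-2$ for $j\neq i$, contributing $\prod_{j\neq i}(x_j-2)$, and the cofactor carries the sign $(-1)^{i+m}$. Because $(-1)^{i+m}=(-1)^{m-i}$, this yields exactly $(-1)^{m-i}\prod_{j\neq i}(x_j-2)$.

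For $\det(M(m))$, I would expand along the last row, whose entries are $(1,\dots,1,y)$. The first $m$ cofactors are governed by the minors $\det(M(m)[m+1,k])$ just computed, each entering with the expansion sign $(-1)^{(m+1)+k}$; since $(-1)^{(m+1)+k}\cdot(-1)^{m-k}=(-1)^{2m+1}=-1$, every leaf term contributes $-\prod_{j\neq k}(x_j-2)$, producing the summand $-\sum_{k=1}^m\prod_{j\neq k}(x_j-2)$. The final cofactor corresponds to deleting the last row and the last column, which leaves precisely the block $\diag(X_m)-2{\sf I}_m+2{\sf J}_m=M_m(2)$; by Proposition~\ref{propo:detcompletemultipartite} with parameter $2$ its determinant is $\prod_{i=1}^m(x_i-2)+2\sum_{i=1}^m\prod_{j\neq i}(x_j-2)$, and it enters with coefficient $y$ and sign $+1$. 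Collecting the two contributions gives $y\prod_{i=1}^m(x_i-2)+(2y-1)\sum_{i=1}^m\prod_{j\neq i}(x_j-2)$, as claimed.

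The arithmetic of the column operations is routine; the one place that needs care is the bookkeeping of signs, both the cofactor sign $(-1)^{i+m}$ in the minor computation and its interaction with the last-row expansion sign $(-1)^{(m+1)+k}$ in the second part. The clean cancellation $(-1)^{2m+1}=-1$ is exactly what merges the $+2y$ coming from the $M_m(2)$ block with the $-1$ coming from the leaf minors to yield the coefficient $2y-1$, so I would verify that step most carefully.
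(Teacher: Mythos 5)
Your proof is correct, and for the minor identity it takes a genuinely different and cleaner route than the paper. The paper computes $\det(M(m)[m+1,i])$ by cofactor expansion along the all-ones center column, which reduces each term to a determinant $\det N[j]|_{x_i=2}$ evaluated via Proposition~\ref{propo:detcompletemultipartite}; this forces delicate sign bookkeeping (the auxiliary functions $\theta(i,j)$ and $\delta(i,j)$, accounting for column switchings) and requires a separate claim that a certain signed double sum vanishes. Your column-operation argument --- subtracting twice the all-ones center column from every leaf column, which diagonalizes the leaf block and zeroes out row $i$ except for its single entry $1$ --- collapses all of that into one expansion along row $i$ with a single cofactor sign $(-1)^{i+m}=(-1)^{m-i}$; it avoids Proposition~\ref{propo:detcompletemultipartite} entirely for this part and needs no cancellation lemma. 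For $\det(M(m))$ your argument coincides with the paper's: expand along the last row, use Proposition~\ref{propo:detcompletemultipartite} with $m=2$ for the cofactor of $y$, and feed in the minor identity with the sign cancellation $(-1)^{(m+1)+k}(-1)^{m-k}=-1$, which is exactly how the coefficient $2y-1$ arises in the paper as well. The trade-off is minor: the paper's expansion-based computation of the minors reuses machinery it has already set up, while your reduction is shorter, more transparent, and localizes all sign-sensitivity in one place.
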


\begin{proof}
    For simplicity, let $N$ denote $\diag(X_m)-2{\sf I}_m+2{\sf J}_m$.
    Since $(M(m)[m+1,i])[j,m]$ is equivalent to $N[j]|_{x_i=2}$, up to $|i-j|-1$ column switchings when $|i-j|\geq 2$, then
\[
    \det(M(m)[m+1,i])=\sum_{j=1}^m(-1)^{m+j}\theta(i,j)\det N[j]|_{x_i=2},
\]
where
\[
\theta(i,j)=
\begin{cases}
    (-1)^{|i-j|-1} & \text{ if }|i-j|\geq 2\\
    1 & \text{otherwise.}
\end{cases}
\]
From which follows that $\det(M(m)[m+1,i])$ is equal to
\[
    (-1)^{m-i-1}\sum_{j=1}^m\delta(i,j)\det N[j]|_{x_i=2},
\]
where
\[
\delta(i,j)=
\begin{cases}
    -1 & \text{ if }i=j\\
    1 & \text{otherwise.}
\end{cases}
\]
By Proposition~\ref{propo:detcompletemultipartite}, $\det(M(m)[m+1,i])$ is equal to
\[
    (-1)^{m-i-1}\sum_{j=1}^m\delta(i,j) \left[ \prod_{\substack{k=1\\ k\neq j}}^m(x_k-2)+2\sum_{\substack{k=1\\ k\neq j}}^m\prod_{\substack{l\neq k\\ l\neq j}}(x_l-2) \right]_{x_i=2},
\]
which is also equal to
\[
    (-1)^{m-i-1}\left(-\prod_{\substack{k=1\\ k\neq i}}^m(x_k-2) + 2\sum_{j=1}^m\delta(i,j) \left[ \sum_{\substack{k=1\\ k\neq j}}^m\prod_{\substack{l\neq k\\ l\neq j}}(x_l-2) \right]_{x_i=2}\right).
\]
The result now follows from the following claim.
\begin{claim}
\[
  \sum_{j=1}^m\delta(i,j) \left[ \sum_{\substack{k=1\\ k\neq j}}^m\prod_{\substack{l\neq k\\ l\neq j}}(x_l-2) \right]_{x_i=2}=0.
\]
\end{claim}
\begin{proof}
For a fixed $i$, if $j\neq i$, then $
\left[ \sum_{\substack{k=1\\ k\neq j}}^m\prod_{\substack{l\neq k\\ l\neq j}}(x_l-2) \right]_{x_i=2}
=
\prod_{\substack{l\neq i\\ l\neq j}}(x_l-2)$.
From this, it follows that
\[
  \sum_{j=1}^m\delta(i,j) \left[ \sum_{\substack{k=1\\ k\neq j}}^m\prod_{\substack{l\neq k\\ l\neq j}}(x_l-2) \right]_{x_i=2}
  =
  -\sum_{\substack{k=1\\ k\neq i}}^m\prod_{\substack{l\neq k\\ l\neq i}}(x_l-2)
  +
  \sum_{\substack{j=1\\ j\neq i}}^m\prod_{\substack{l\neq i\\ l\neq j}}(x_l-2)
  =
  0.
\]
\end{proof}

Finally,
\begin{eqnarray*}
    \det M(m) & = & y\det N + \sum_{i=1}^m(-1)^{m+1+i}\det(M(m)[m+1,i])\\
    & = & y\left( \prod_{i=1}^m(x_i-2)+2\sum_{i=1}^m\prod_{j\neq i}(x_j-2) \right) - \sum_{i=1}^m\prod\limits^m_{\substack{j=1\\ j\neq i}}(x_j-2)\\
    & = & y\prod\limits_{i=1}^m(x_i-2)+(2y-1)\sum\limits_{i=1}^m\prod\limits^m_{\substack{j=1\\ j\neq i}}(x_j-2).
\end{eqnarray*}
\end{proof}

\begin{theorem}
    Let
    \[
    C_k=\left\{ \prod\limits_{i\in \mathcal{I}}(x_i-2) : \mathcal{I}\subset [m] \text{ and } |\mathcal{I}| = k-1\right\}
    \]
    and
    \[
    D_k=\left\{ (2y-1)\prod\limits_{i\in \mathcal{I}}(x_i-2) : \mathcal{I}\subset [m] \text{ and } |\mathcal{I}| = k-2  \right\}.
    \]
    For $k \in [n-1]$, the $k$-th distance ideal of the star graph $K_{m,1}$ is generated by $\langle C_k \cup D_k\rangle$.

\end{theorem}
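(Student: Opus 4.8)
The plan is to identify the distance matrix of $K_{m,1}$ with the matrix $M(m)$ of Theorem~\ref{teo:detK1m}, writing $y$ for the indeterminate at the center vertex $m+1$ and $x_1,\dots,x_m$ for those at the leaves, and then to classify all $k\times k$ minors according to how the chosen row set $R$ and column set $C$ meet the center. Write $N=\diag(X_m)-2{\sf I}_m+2{\sf J}_m$ for the leaf block, so that $N=\diag(x_i-2)+2{\sf J}_m$ is a rank-one update of a diagonal matrix. First I would dispose of the minors that avoid the center, i.e.\ $m+1\notin R\cup C$: these are exactly the $k$-minors of $N$, which is $M_m(2)$ in the notation of Proposition~\ref{prop:detgencompletegraph}, so each such minor lies in $\langle A_k\cup B_k\rangle$ with parameter $2$. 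Every generator of $A_k$ is $2\prod_{i\in\mathcal I}(x_i-2)$ with $|\mathcal I|=k-1$, and every generator of $B_k$ is a $\mathbb Z$-combination of products of the $(x_i-2)$ with $k-1$ or $k$ factors; since each such product is divisible by a $(k-1)$-fold product, all of these lie in $\langle C_k\rangle$.

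The structural heart of the argument is that, because off-diagonal leaf entries of $N$ are the constant $2$ and center--leaf entries are the constant $1$, any leaf row whose index lies outside the chosen leaf columns is an ``all $2$'s'' row (with a $1$ in the center slot), and symmetrically for columns. Hence if the leaf parts of $R$ and $C$ differ in two or more indices, the submatrix has two equal rows or two equal columns and its minor vanishes. This leaves only three shapes of center-meeting minor: the center in exactly one of $R,C$ (then the leaf parts must agree up to one extra index), the principal shape where the leaf parts coincide, and the shape where they differ in exactly one index. I would evaluate the first two by clearing the constant lines using a single pivot equal to the center--leaf entry $1$ — subtracting integer multiples of that pivot row or column, which is valid over any commutative ring $\mathcal R\supseteq\mathbb Z$ — reducing each to $\det\bigl(\diag(x_i-2:i\in R')\bigr)=\pm\prod_{i\in R'}(x_i-2)$ with $|R'|=k-1$, a generator of $C_k$. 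The ``differ in one index'' shape collapses after one column operation that isolates the factor $1-2y$, then reduces to $\prod_{i\in\mathcal K}(x_i-2)$ with $|\mathcal K|=k-2$, yielding $\pm(2y-1)\prod_{i\in\mathcal K}(x_i-2)$, a generator of $D_k$. Finally the principal center minors are given directly by Theorem~\ref{teo:detK1m}, whose value $y\prod(x_i-2)+(2y-1)\sum\prod(x_j-2)$ is manifestly in $\langle C_k\cup D_k\rangle$. Together these give $I^{\mathcal R}_k(K_{m,1},X)\subseteq\langle C_k\cup D_k\rangle$.

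For the reverse containment I would exhibit each generator as an explicit minor. A generator $\prod_{i\in\mathcal I}(x_i-2)$ of $C_k$ with $|\mathcal I|=k-1$ occurs as the minor on leaf rows $\mathcal I$ together with the center row, and leaf columns $\mathcal I\cup\{c_0\}$ for any $c_0\in[m]\setminus\mathcal I$; a generator $(2y-1)\prod_{i\in\mathcal K}(x_i-2)$ of $D_k$ with $|\mathcal K|=k-2$ occurs as the ``differ in one index'' minor on leaf rows $\mathcal K\cup\{a\}$ and leaf columns $\mathcal K\cup\{b\}$ together with the center. Each choice needs one or two leaf indices outside a set of size at most $k-1$, which is available precisely because $k\le n-1=m$; this is exactly why the statement is asserted on the range $k\in[n-1]$, and it also matches the boundary cases (for $k=1$ the empty product gives $C_1=\{1\}$, recovering triviality of the first distance ideal).

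The step I expect to be the main obstacle is the containment $\langle C_k\rangle\subseteq I^{\mathcal R}_k$. The minors internal to the leaf block only ever produce $2\prod_{i\in\mathcal I}(x_i-2)$ — the factor $2$ being forced by $A_k$ — and over a ring strictly larger than $\mathbb Z$ the ideal $\langle 2g\rangle$ is a proper sub-ideal of $\langle g\rangle$. Thus one genuinely needs the center vertex to manufacture the clean, factor-$2$-free generators of $C_k$. The care in the proof therefore goes into pinning down exactly which center-meeting minors are nonzero (the ``two equal rows/columns'' vanishing argument) and into carrying out the single-pivot eliminations with integer multipliers so that every step remains valid over an arbitrary commutative ring $\mathcal R\supseteq\mathbb Z$; once this bookkeeping is fixed, the remaining evaluations are immediate from Proposition~\ref{prop:detgencompletegraph} and Theorem~\ref{teo:detK1m}.
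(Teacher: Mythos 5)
Your proposal is correct and follows essentially the same route as the paper's proof: the same three-way classification of $k\times k$ minors according to whether the row and column sets contain the center vertex $m+1$, the same vanishing argument when the leaf parts differ in two or more indices, the same reduction of leaf-only minors to Proposition~\ref{prop:detgencompletegraph} with $\langle A_k\cup B_k\rangle\subseteq\langle C_k\rangle$, and the same use of Theorem~\ref{teo:detK1m} for the principal minors. The only differences are cosmetic: you evaluate the mixed minors by explicit integer row operations where the paper substitutes $x_i=2$ into the formulas of Theorem~\ref{teo:detK1m}, and you spell out the reverse containment and the role of the hypothesis $k\le n-1$, which the paper compresses into the single sentence that ``the other statement can be derived from cases (a) and (b).''
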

\begin{proof}
    Let $M(m)$ be a matrix defined as in Theorem~\ref{teo:detK1m}, and $N=M(m)[\mathcal{I},\mathcal{J}]$ with $|\mathcal{I}|=|\mathcal{J}|=k$.
    There are 3 possible cases:
    \begin{enumerate}[a.]
        \item Both sets $\mathcal{I}$ and $\mathcal{J}$ contain $m+1$,
        \item Only one of the sets $\mathcal{I}$ or $\mathcal{J}$ contains $m+1$, and
        \item Neither $\mathcal{I}$ nor $\mathcal{J}$ contains $m+1$.
    \end{enumerate}
In case (a), $N$ is equivalent to $M(m)[\mathcal{I}]|_{\{x_i = 2 \,:\, i \in \mathcal{I}\setminus\mathcal{J} \}}$.
Note that if $|\mathcal{I}\setminus\mathcal{J}|\geq 2$, then $\det(N)=0$.
If $|\mathcal{I}\setminus\mathcal{J}|=1$, then by adequately applying Equation 2 of Theorem~\ref{teo:detK1m} we obtain
\begin{eqnarray*}
\det(N) & = & \left[y\prod_{i\in \mathcal{I}\setminus\{ m+1\}}(x_i-2)+(2y-1)\sum_{i\in \mathcal{I}\setminus \{m+1\}}\prod_{j\in\mathcal{I}\setminus \{i,m+1\}}(x_j-2)\right]_{\{x_i = 2 \,:\, i \in \mathcal{I}\setminus\mathcal{J} \}}\\
 & = & (2y-1)\prod_{ j\in\mathcal{I}\cap\mathcal{J}\setminus\{ m+1 \} }(x_j-2) \in D_k.
\end{eqnarray*}
If $\mathcal{I}=\mathcal{J}$, then
by applying Equation 2 of Theorem~\ref{teo:detK1m} we obtain
\begin{eqnarray*}
\det(N) & = & y\prod_{i\in \mathcal{I}\setminus\{ m+1\}}(x_i-2)+(2y-1)\sum_{i\in \mathcal{I}\setminus \{m+1\}}\prod_{j\in\mathcal{I}\setminus \{i,m+1\}}(x_j-2),
\end{eqnarray*}
which is in $\langle C_k \cup D_k\rangle$.

In case (b), let us assume, without loss of generality, $m+1\in \mathcal{I}$.
We have $N$ is equivalent to $M(m)[\mathcal{I}]|_{\{x_i = 2 \,:\, i \in \mathcal{J} \setminus \mathcal{I} \}}$.
Note that if $|\mathcal{J} \setminus \mathcal{I}|\geq 2$, then $\det(N)=0$, and $|\mathcal{J} \setminus \mathcal{I}|\neq 0$ since otherwise $m+1$ would be in $\mathcal{J}$.
Thus $|\mathcal{J} \setminus \mathcal{I}|=1$, then
by applying Equation 1 of Proposition~\ref{teo:detK1m} we obtain $\det(N)$ is equal to, up to sign, $\prod_{i\in\mathcal{J} \cap \mathcal{I}}(x_i-2)$ which is in $C_k$.

Finally, in case (c), we have that $\det(N)$ is in $A_k$ or $B_k$ of Proposition~\ref{prop:detgencompletegraph}.
The result now follows since $\langle A_k\cup B_k\rangle\subset \langle C_k\rangle$.

The other statement can be derived from cases (a) and (b).
\end{proof}

As in the case of complete graphs, this description could be used to give the Smith normal form of the distance matrix and distance Laplacian matrix of the star graphs over the integers.

\begin{corollary}
    The Smith normal form of the distance matrix of the star graph with $m$ leaves is ${\sf I}_{2}\oplus 2 {\sf I}_{n-2}\oplus 2m$.
\end{corollary}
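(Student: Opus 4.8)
The plan is to invoke Proposition~\ref{teo:eval1}: evaluating the distance ideals at $X_{K_{m,1}}={\bf 0}$ produces the products $\prod_{j=1}^{i}f_j$ of the invariant factors of $D(K_{m,1})$, so that $\Delta_i$, the greatest common divisor over $\mathbb{Z}$ of the $i$-minors, equals a generator of $I^{\mathbb{Z}}_i(K_{m,1},{\bf 0})$, and each invariant factor is recovered as $f_i=\Delta_i/\Delta_{i-1}$ with $\Delta_0=1$. Throughout I would keep in mind that the star $K_{m,1}$ has $n=m+1$ vertices, with leaf variables $x_1,\dots,x_m$ and center variable $y$.

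First I would evaluate the generators $C_k\cup D_k$ of the $k$-th distance ideal (from the description of $I^{\mathcal{R}}_k(K_{m,1},X_{K_{m,1}})$ obtained above) at $y=0$ and $x_i=0$ for all $i$, for $k\in[n-1]$. Each factor $(x_i-2)$ becomes $-2$, so every element of $C_k$ evaluates to $(-2)^{k-1}$ and every element of $D_k$ evaluates to $(2\cdot 0-1)(-2)^{k-2}=\pm 2^{\,k-2}$. Using the empty-product convention this gives $\Delta_1=1$, and for $2\le k\le n-1$ it gives $\Delta_k=\gcd\!\left(2^{\,k-1},2^{\,k-2}\right)=2^{\,k-2}$.

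Next I would treat the top ideal $k=n=m+1$ separately, since the $C_k\cup D_k$ description only covers $k\le n-1$. For this I would use Equation (2) of Theorem~\ref{teo:detK1m} for $\det M(m)$; evaluating at $y=0$, $x_i=0$ annihilates the first summand and leaves $(2\cdot 0-1)\sum_{i=1}^{m}(-2)^{m-1}=(-1)\,m\,(-2)^{m-1}$, whence $\Delta_n=m\cdot 2^{\,m-1}$. Forming the telescoping ratios then yields $f_1=f_2=1$, next $f_k=2^{\,k-2}/2^{\,k-3}=2$ for $3\le k\le m$, and finally $f_{m+1}=\bigl(m\,2^{\,m-1}\bigr)/2^{\,m-2}=2m$. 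Counting, this is $n=m+1$ invariant factors in all: two equal to $1$, exactly $m-2=n-3$ equal to $2$, and one equal to $2m$, giving the Smith normal form ${\sf I}_2\oplus 2{\sf I}_{m-2}\oplus 2m$ (so the copies of $2$ should be counted as $n-3$ rather than $n-2$).

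The main obstacle I anticipate is the bookkeeping at the low end of the range. One must verify the empty-product convention so that $\Delta_1=1$, and, more delicately, confirm that the \emph{second} invariant factor is genuinely trivial. This is precisely the point where $D_2$ contributes the unit $\pm 1$ (the evaluation of $2y-1$), forcing $\Delta_2=\gcd(2,1)=1$; without that generator one would wrongly obtain $\Delta_2=2$ and shift every subsequent factor. Small sanity checks confirm the count: for $m=2$ (the path $P_3$) one gets $\mathrm{diag}(1,1,4)={\sf I}_2\oplus 4$, and for $m=3$ (the claw) one gets $\mathrm{diag}(1,1,2,6)={\sf I}_2\oplus 2{\sf I}_1\oplus 6$. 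Once Theorem~\ref{teo:detK1m} and the $C_k\cup D_k$ description are in hand, the remaining determinant evaluation and ratio computation are routine.
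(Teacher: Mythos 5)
Your proof is correct and takes essentially the same approach as the paper: evaluate the generators of the distance ideals at the zero vector to obtain $\Delta_1=\Delta_2=1$, $\Delta_k=2^{k-2}$ for $3\le k\le m$, and $\Delta_{m+1}=2^{m-1}m$, then recover the invariant factors as successive ratios (your treatment is in fact more detailed, supplying the gcd bookkeeping and the evaluation of Equation~(2) of Theorem~\ref{teo:detK1m} that the paper leaves implicit). Your remark about the count of $2$'s is also right: the correct form is ${\sf I}_2\oplus 2{\sf I}_{m-2}\oplus 2m$, so the statement's subscript $n-2$ is an off-by-one slip (it should be $m-2$, i.e.\ $n-3$ when $n=m+1$ denotes the number of vertices), exactly as the paper's own computation of the $\Delta_i$ shows.
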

\begin{proof}
    After the evaluating the distance ideal at $X_G={\bf 0}$, we have $\Delta_i=1$, for $i\in [2]$; $\Delta_i=2^{(i-2)}$, for $i\in \{3,...,m\}$; and $\Delta_{m+1}=2^{m-1}m$.
    %From which the result follows.
\end{proof}

\begin{corollary}
    The Smith normal form of the distance Laplacian matrix of the complete graph with $n$ vertices is ${\sf I}_{m}\oplus 2m(m-1)$.
\end{corollary}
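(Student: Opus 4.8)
The plan is to read the invariant factors off the distance ideals of $K_n$ supplied by Theorem~\ref{distanceidealscompletegraphs}, using Proposition~\ref{teo:eval1} as the bridge. Every vertex of $K_n$ has the same transmission $tr(u)=n-1$, so by the definition of the distance Laplacian as $-D(G,X_G)|_{x_u=-tr(u)}$, I would obtain it by the substitution $x_u=-(n-1)$ for all $u$; negating the matrix only multiplies each $i$-minor by the unit $(-1)^i$ and so leaves the Smith normal form unchanged. Thus I would compute each $\Delta_i$ as the $\gcd$ of the generators of $I^{\mathbb{Z}}_i(K_n,X)$ after the evaluation, and the single numerical input is that under $x_j=-(n-1)$ we have $x_j-1=-n$.

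For $i<n$, Theorem~\ref{distanceidealscompletegraphs} presents $I^{\mathbb{Z}}_i(K_n,X)$ as generated by the products $\prod_{j\in\mathcal{I}}(x_j-1)$ over $(i-1)$-subsets $\mathcal{I}\subset[n]$. After the substitution every such generator becomes $(-n)^{i-1}=\pm n^{i-1}$, so the evaluated ideal is exactly $\langle n^{i-1}\rangle$ and hence $\Delta_i=n^{i-1}$. With the convention $\Delta_0=1$ (and $\prod_{\emptyset}=1$ giving $\Delta_1=1$), this yields $f_1=1$ and $f_i=\Delta_i/\Delta_{i-1}=n$ for $2\le i\le n-1$, that is, $n-2$ invariant factors equal to $n$.

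For $i=n$ the lone generator $\prod_{j=1}^n(x_j-1)+\sum_{k=1}^n\prod_{j\neq k}(x_j-1)$ evaluates to $(-n)^n+n(-n)^{n-1}=(-n)^{n-1}((-n)+n)=0$, so $\Delta_n=0$ and $f_n=0$. Collecting the factors gives the invariant factors $1,n,\dots,n,0$ and hence the Smith normal form $1\oplus n{\sf I}_{n-2}\oplus 0$, consistent with the earlier corollary for the complete graph; accordingly the right-hand side displayed in the statement should read $1\oplus n{\sf I}_{n-2}\oplus 0$. I expect no genuine obstacle here: the only thing that needs verifying is that the degree-$i$ $\gcd$ is exactly $n^{i-1}$ rather than a proper divisor, and this is immediate because all generators in a fixed degree coincide up to sign, so the remaining work is pure bookkeeping of the ranges of $i$.
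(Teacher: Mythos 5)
Your argument is internally correct, but it proves a different statement from the one the paper's own proof addresses, and the comparison exposes a genuine error in the paper. As written the corollary is garbled (it says ``complete graph with $n$ vertices'' while its formula is in $m$), and you repaired it by keeping the graph and fixing the formula: evaluating the ideals of Theorem~\ref{distanceidealscompletegraphs} at $x_u=-(n-1)$, getting $\Delta_i=n^{i-1}$ for $i<n$ and $\Delta_n=0$, hence $1\oplus n{\sf I}_{n-2}\oplus 0$. That computation is sound and is, in effect, the missing proof of the paper's \emph{earlier} corollary on $K_n$, which is stated without proof. The paper repairs the inconsistency the other way: this corollary is meant as the star-graph companion of the corollary just before it, and the paper's proof evaluates the $\langle C_k\cup D_k\rangle$ description of the distance ideals of $K_{m,1}$ at $x_i=1$ for the leaves and $y=m$ for the center, obtaining $\Delta_i=1$ for $i\in[m]$, $\Delta_{m+1}=2m(m-1)$, and thus the stated form ${\sf I}_m\oplus 2m(m-1)$. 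So your proof and the paper's proof concern different matrices; only the (erroneous) headline text coincides.

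Moreover, under the intended star-graph reading the paper's own proof is itself wrong, and your method makes this visible. In $K_{m,1}$ the transmissions are $tr(\text{leaf})=2m-1$ and $tr(\text{center})=m$, so the distance-Laplacian evaluation prescribed by the paper's definition $-D(G,X_G)|_{x_u=-tr(u)}$ is $x_i=-(2m-1)$ and $y=-m$, not $x_i=1$ and $y=m$. With the correct substitution one has $x_i-2=2y-1=-(2m+1)$, every generator in $C_k\cup D_k$ evaluates to $\pm(2m+1)^{k-1}$, and the determinant of Theorem~\ref{teo:detK1m} evaluates to $-m\bigl(-(2m+1)\bigr)^m+m\bigl(-(2m+1)\bigr)^m=0$; hence the true answer is $1\oplus(2m+1){\sf I}_{m-1}\oplus 0$ (for $m=2$, i.e.\ $P_3$, a direct check gives $\diag(1,5,0)$). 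This is forced in any case: every distance Laplacian annihilates the all-ones vector, so its Smith normal form must end in $0$, whereas the stated nonsingular form ${\sf I}_m\oplus 2m(m-1)$ is in fact the Smith normal form of $D(K_{m,1})+\diag(1,\dots,1,m)$, the matrix the paper actually evaluated. In short: your proof is correct for the $K_n$ reading and reproduces the earlier corollary; the likelier intended fix, however, is to the graph rather than the formula, and then the right-hand side should read $1\oplus(2m+1){\sf I}_{m-1}\oplus 0$, not ${\sf I}_m\oplus 2m(m-1)$.
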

\begin{proof}
    After evaluating the distance ideal at $x_i=1$ for $i\in[m]$ and $y=m$, we obtain $\Delta_i=1$ for $i\in[m]$, and $\Delta_{m+1}=2m(m-1)$, from which the invariant factors can be easily obtained.
\end{proof}

\section{Graphs with at most one trivial distance ideal}\label{section:classification}
Despite the fact that distance ideals are, in general, not monotone under taking induced subgraphs, we will be able to classify the graphs which have exactly 1 trivial distance ideal over $\mathbb{Z}$ and $\mathbb{R}$ in terms of forbidden induced subgraphs.
%Since we want to classify the graphs whose distance matrix has Smith norm form with exactly 1 invariant factor equal to 1,
%{\color{red}We may try give one also for $\mathcal{R}=\mathbb{R}$.}

Let $\Lambda^{\mathcal{R}}_k$ denote the family of graphs with at most $k$ trivial distance ideals over $\mathcal{R}$.
A graph $G$ is {\it forbidden} for $\Lambda^{\mathcal{R}}_k$ if the $(k+1)$-th distance ideal, over ${\mathcal{R}}$, of $G$ is trivial.
The set of forbidden graphs for $\Lambda^{\mathcal{R}}_k$ will be denoted by ${\sf Forb}^{\mathcal{R}}_k$.
In addition, a graph $G\in {\sf Forb}^{\mathcal{R}}_k$ is {\it minimal} if $G$ does not contain a graph in ${\sf Forb}^{\mathcal{R}}_k$ as induced subgraph, and for any graph $H$ containing $G$ as induced subgraph, $H\in{\sf Forb}^{\mathcal{R}}_k$.

% We say that a graph is {\it forbidden} for graphs with at most $k$ trivial distance ideals if its $(k+1)$-th distance ideal is trivial.
% In addition, we say that a forbidden graph $G$ for graphs with at most $k$ trivial distance ideals is {\it minimal} if there is no induced subgraph with $k+1$ trivial distance ideals and the $(k+1)$-th distance ideal is trivial for any graph containing $G$ as induced subgraph.

First we consider the case over $\mathbb{Z}$.

\begin{figure}[h!]
\begin{center}
\begin{tabular}{c@{\extracolsep{10mm}}c@{\extracolsep{10mm}}c@{\extracolsep{10mm}}c@{\extracolsep{10mm}}c}
	\begin{tikzpicture}[scale=.7]
	\tikzstyle{every node}=[minimum width=0pt, inner sep=2pt, circle]
	\draw (126+36:1) node (v1) [draw] {};
	\draw (198+36:1) node (v2) [draw] {};
	\draw (270+36:1) node (v3) [draw] {};
	\draw (342+36:1) node (v4) [draw] {};
	\draw (v1) -- (v2);
	\draw (v2) -- (v3);
	\draw (v4) -- (v3);
	\end{tikzpicture}
&
	\begin{tikzpicture}[scale=.7]
	\tikzstyle{every node}=[minimum width=0pt, inner sep=2pt, circle]
	\draw (-.5,-.9) node (v1) [draw] {};
	\draw (.5,-.9) node (v2) [draw] {};
	\draw (0,0) node (v3) [draw] {};
	\draw (0,.9) node (v4) [draw] {};
	\draw (v1) -- (v2);
	\draw (v1) -- (v3);
	\draw (v2) -- (v3);
	\draw (v3) -- (v4);
	\end{tikzpicture}
&
	\begin{tikzpicture}[scale=.7]
	\tikzstyle{every node}=[minimum width=0pt, inner sep=2pt, circle]
	\draw (-.5,0) node (v2) [draw] {};
	\draw (0,-.9) node (v1) [draw] {};
	\draw (.5,0) node (v3) [draw] {};
	\draw (0,.9) node (v4) [draw] {};
	\draw (v1) -- (v2);
	\draw (v1) -- (v3);
	\draw (v2) -- (v3);
	\draw (v2) -- (v4);
	\draw (v3) -- (v4);
	\end{tikzpicture}
\\
$P_4$
&
$\sf{paw}$
&
$\sf{diamond}$
\end{tabular}
\end{center}
\caption{The graphs $P_4$, $\sf{paw}$ and $\sf{diamond}$.}
\label{fig:forbiddendistance1}
\end{figure}
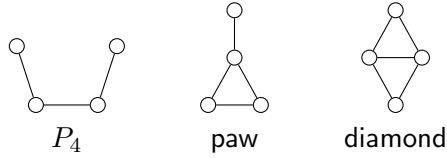

\begin{lemma}\label{lemma:P4PawDiamondForbidden}
    The graphs $P_4$, $\sf{paw}$ and $\sf{diamond}$ are minimal forbidden graphs for graphs with 1 trivial distance ideal over $\mathbb{Z}$.
\end{lemma}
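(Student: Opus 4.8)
The plan is to verify, for each of the three graphs $G\in\{P_4,\mathsf{paw},\mathsf{diamond}\}$, the two requirements in the definition of a minimal forbidden graph for $\Lambda^{\mathbb{Z}}_1$: first, that $G$ itself is forbidden, i.e. $I^{\mathbb{Z}}_2(G,X_G)=\langle 1\rangle$; and second, that $G$ is minimal, which I would split into an \emph{upward} part (every graph $H$ containing $G$ as an induced subgraph is again forbidden) and a \emph{downward} part (no proper connected induced subgraph of $G$ is forbidden).

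For the forbidden part I would simply write down $D(G,X_G)$ in each case (all three graphs have order $4$) and exhibit a purely off-diagonal $2\times 2$ submatrix whose determinant is a unit of $\mathbb{Z}$. For $\mathsf{paw}$ and $\mathsf{diamond}$, both of diameter $2$, one can choose rows and columns so that the block is $\bigl(\begin{smallmatrix}1&2\\1&1\end{smallmatrix}\bigr)$, with determinant $-1$; for $P_4$ the block on rows $\{v_1,v_2\}$ and columns $\{v_3,v_4\}$ is $\bigl(\begin{smallmatrix}2&3\\1&2\end{smallmatrix}\bigr)$, with determinant $1$. In each case this places $\pm1$ among the generators of $I^{\mathbb{Z}}_2(G,X_G)$, so the ideal is trivial.

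For the upward part the diameters do the work, and this is where the genuine subtlety lies, since distance ideals are not monotone under induced subgraphs in general. Because $\mathsf{paw}$ and $\mathsf{diamond}$ have diameter $2$, Lemma~\ref{lemma:distance2inducedmonotone} applies directly: whenever either is an induced subgraph of some $H$, we get $I^{\mathbb{Z}}_2(\mathsf{paw},X)\subseteq I^{\mathbb{Z}}_2(H,X_H)$ (resp. for $\mathsf{diamond}$), and triviality is inherited. The only case needing extra care is $P_4$, whose diameter is $3$, so an induced copy of $P_4$ in $H$ may have its end-to-end distance shortened from $3$ to $2$ by a common neighbour $u$ of $v_1$ and $v_4$; but this is precisely the situation analysed in Example~\ref{example:P4inducemonotone}, where the relevant $2\times 2$ minor equals $-1$ irrespective of the shortcut, while if no such $u$ exists the distances within $V(P_4)$ are preserved and Lemma~\ref{lemma:inducemonotone} gives $I^{\mathbb{Z}}_2(P_4,X_{P_4})\subseteq I^{\mathbb{Z}}_2(H,X_H)$. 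This settles the upward part for all three graphs.

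For the downward part I would note that the connected induced subgraphs of $P_4$, $\mathsf{paw}$, $\mathsf{diamond}$ on three or fewer vertices are, after a short case check, exactly $K_1,K_2,P_3,K_3$, and then verify that none of these is forbidden over $\mathbb{Z}$. For $K_1$ there are no $2\times 2$ minors, so $I^{\mathbb{Z}}_2=\langle 0\rangle\neq\langle 1\rangle$. For the remaining three I would show $1\notin I^{\mathbb{Z}}_2$ by exhibiting a point of $V_{\mathbb{R}}(I^{\mathbb{Z}}_2)$, which forces the ideal to be proper over $\mathbb{R}$ and hence over $\mathbb{Z}$; for instance $(x_1,x_2,x_3)=(2,\tfrac12,2)$ annihilates every $2\times 2$ minor of $D(P_3,X)$, and $(1,\dots,1)$ works for $K_2$ and $K_3$. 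I expect this downward step to be the main, though routine, obstacle: on at most three vertices every $2\times 2$ submatrix of a generalized distance matrix meets the diagonal and so carries an indeterminate, so no constant unit minor is available and properness of $I^{\mathbb{Z}}_2$ must be argued directly rather than by spotting a unit. Combining the forbidden, upward, and downward parts yields the claim.
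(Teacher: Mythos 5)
Your proof is correct, and its skeleton is the one the paper uses: establish that each graph is forbidden, prove upward minimality via Lemma~\ref{lemma:distance2inducedmonotone} (for $\mathsf{paw}$ and $\mathsf{diamond}$, which have diameter $2$) together with Example~\ref{example:P4inducemonotone} and Lemma~\ref{lemma:inducemonotone} (for $P_4$, split according to whether $d_H(v_1,v_4)$ is $2$ or $3$), and prove downward minimality by checking the proper connected induced subgraphs $K_1,K_2,P_3,K_3$. Where you genuinely differ is in how the two computational ingredients are discharged: the paper verifies both the forbidden part and the downward part by running the Gr\"obner-basis code in its appendix, whereas you do both by hand. For the forbidden part you exhibit explicit off-diagonal $2\times 2$ blocks with unit determinant, e.g. $\bigl(\begin{smallmatrix}1&2\\1&1\end{smallmatrix}\bigr)$ for $\mathsf{paw}$ and $\mathsf{diamond}$ and $\bigl(\begin{smallmatrix}2&3\\1&2\end{smallmatrix}\bigr)$ for $P_4$; these are correct computations. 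For the downward part you certify that $I^{\mathbb{Z}}_2$ is proper for $K_2$, $P_3$, $K_3$ by producing a point annihilating all $2\times 2$ minors (the point $(2,\tfrac12,2)$ does kill every $2$-minor of $D(P_3,X)$, and $(1,\dots,1)$ works for $K_2$, $K_3$), so that $1$, which cannot vanish at such a point, cannot lie in the ideal; this variety-point certificate is a clean replacement for the machine check, and it is needed precisely because, as you note, on at most three vertices every $2\times 2$ submatrix of the generalized distance matrix meets the diagonal, so no constant unit minor is available. What each approach buys: yours is self-contained and human-verifiable, while the paper's computation gives slightly more information (that these graphs have \emph{exactly} two trivial ideals, i.e. that $I^{\mathbb{Z}}_3$ is non-trivial as well), which is not actually required for this lemma. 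One cosmetic remark: your case split for $P_4$ correctly observes that in an induced copy only the end-to-end distance can change, and only from $3$ to $2$ via a common neighbour of $v_1$ and $v_4$, which is exactly the situation Example~\ref{example:P4inducemonotone} covers, so the upward part is complete as written.
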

\begin{proof}
    The fact that these are forbidden graphs follows from the observation that $P_4$, $\sf{paw}$ and $\sf{diamond}$ have exactly 2 trivial distance ideals over $\mathbb{Z}$, this can be verified with the code in the Appendix.
The minimality follows from Lemma \ref{lemma:distance2inducedmonotone} and Example \ref{example:P4inducemonotone}, and the fact that no proper induced subgraph of these graphs has 2 trivial distance ideals over $\mathbb{Z}$.
\end{proof}

\begin{figure}[h!]
\begin{center}
\begin{tabular}{c@{\extracolsep{10mm}}c@{\extracolsep{10mm}}c@{\extracolsep{10mm}}c@{\extracolsep{10mm}}c}
   \begin{tikzpicture}[scale=.7]
	\tikzstyle{every node}=[minimum width=0pt, inner sep=2pt, circle]
	\draw (126-36:1) node (v1) [draw] {};
	\draw (198-36:1) node (v2) [draw] {};
	\draw (270-36:1) node (v3) [draw] {};
	\draw (342-36:1) node (v4) [draw] {};
	\draw (414-36:1) node (v5) [draw] {};
	\draw (v1) -- (v2);
	%\draw (v1) -- (v3);
	%\draw (v1) -- (v4);
	\draw (v1) -- (v5);
	\draw (v2) -- (v3);
	\draw (v2) -- (v4);
	\draw (v2) -- (v5);
	\draw (v3) -- (v4);
	\draw (v3) -- (v5);
	\draw (v4) -- (v5);
	\end{tikzpicture}
&
	\begin{tikzpicture}[scale=.7]
	\tikzstyle{every node}=[minimum width=0pt, inner sep=2pt, circle]
	\draw (0:1) node (v1) [draw] {};
	\draw (60:1) node (v2) [draw] {};
	\draw (120:1) node (v3) [draw] {};
	\draw (180:1) node (v4) [draw] {};
	\draw (240:1) node (v5) [draw] {};
	\draw (300:1) node (v6) [draw] {};
	\draw (v1) -- (v2);
	\draw (v1) -- (v3);
	%\draw (v1) -- (v4);
	\draw (v1) -- (v5);
	\draw (v1) -- (v6);
	%\draw (v2) -- (v3);
	\draw (v2) -- (v4);
	\draw (v2) -- (v5);
	\draw (v2) -- (v6);
	\draw (v3) -- (v4);
	\draw (v3) -- (v5);
	\draw (v3) -- (v6);
	\draw (v4) -- (v5);
	\draw (v4) -- (v6);
	\draw (v5) -- (v6);
	\end{tikzpicture}
&
	\begin{tikzpicture}[scale=.7]
	\tikzstyle{every node}=[minimum width=0pt, inner sep=2pt, circle]
	\draw (-.5,-.9) node (v1) [draw] {};
	\draw (.5,-.9) node (v2) [draw] {};
	\draw (0,0) node (v3) [draw] {};
	\draw (-.5,.9) node (v4) [draw] {};
	\draw (.5,.9) node (v5) [draw] {};
	\draw (v1) -- (v2);
	\draw (v1) -- (v3);
	\draw (v2) -- (v3);
	\draw (v3) -- (v4);
	\draw (v3) -- (v5);
	\end{tikzpicture}
&
	\begin{tikzpicture}[scale=.7]
	\tikzstyle{every node}=[minimum width=0pt, inner sep=2pt, circle]
	\draw (-.5,0) node (v2) [draw] {};
	\draw (0,-.9) node (v1) [draw] {};
	\draw (.5,0) node (v3) [draw] {};
	\draw (1.5,0) node (v5) [draw] {};
	\draw (0,.9) node (v4) [draw] {};
	\draw (v1) -- (v2);
	\draw (v1) -- (v3);
	\draw (v2) -- (v3);
	\draw (v2) -- (v4);
	\draw (v3) -- (v4);
	\draw (v3) -- (v5);
	\end{tikzpicture}
\\
$K_5\setminus P_2$
&
$K_6\setminus M_2$
&
$\ltimes$
&
\sf{dart}
\end{tabular}
\end{center}
\caption{The graphs $K_5\setminus P_2$, $K_6\setminus M_2$, $\ltimes$ and $\sf dart$.}
\label{fig:forbiddencritical2}
\end{figure}
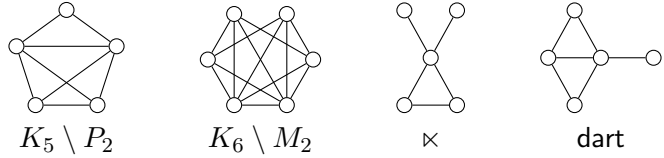

Given a family $\mathcal{F}$ of graphs, a graph is $\mathcal{F}$-free if  no induced subgraph of $G$ is isomorphic to a graph in $\mathcal{F}$.

\begin{lemma}\cite[Theorem 4.3]{AV}\label{lem:classificationgamma2}
   A simple connected graph is $\{P_4, K_5\setminus P_2, K_6\setminus M_2, \ltimes, \sf{dart}\}$-free if and only if it is an induced subgraph of $K_{m,n,o}$ or $\overline{K_n} \vee (K_m+K_o)$.
\end{lemma}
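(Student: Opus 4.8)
The statement is a forbidden-induced-subgraph characterization, so the plan is to prove the two implications separately, exploiting that both target families and the property of being $\mathcal{F}$-free (with $\mathcal{F}=\{P_4, K_5\setminus P_2, K_6\setminus M_2, \ltimes, \textsf{dart}\}$) are closed under taking induced subgraphs. For the easy direction it then suffices to check that none of the five graphs of $\mathcal{F}$ is itself an induced subgraph of any $K_{m,n,o}$ or $\overline{K_n}\vee(K_m+K_o)$. I would phrase this through complements, using that $X$ is an induced subgraph of $K_{m,n,o}$ exactly when $\overline{X}$ is a disjoint union of at most three cliques, and that $X$ is an induced subgraph of $\overline{K_n}\vee(K_m+K_o)$ exactly when $\overline{X}$ is the disjoint union of a clique and a complete bipartite graph (with possibly empty blocks). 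Computing the five complements, namely $\overline{P_4}=P_4$, $\overline{K_5\setminus P_2}=P_3\cup 2K_1$, $\overline{K_6\setminus M_2}=2K_2\cup 2K_1$, $\overline{\ltimes}=\textsf{diamond}\cup K_1$ and $\overline{\textsf{dart}}=\textsf{paw}\cup K_1$, one then checks each fails both conditions; for instance $2K_2\cup 2K_1$ is a union of four cliques (too many for the first family) and cannot be written as a clique plus a complete bipartite graph, since neither block can carry both an edge and an extra isolated vertex.

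For the converse, assume $G$ is connected and $\mathcal{F}$-free and pass to $H=\overline{G}$. As $P_4$ is self-complementary, $H$ is $P_4$-free and hence a cograph, and since $G$ is connected $H$ decomposes as a disjoint union of components $C_1,\dots,C_t$ with $t\ge 2$ (for $|V(G)|\ge 2$). The four remaining forbidden graphs all have the form ``small connected graph $\cup\, K_1$'', so with $t\ge 2$ any induced diamond or paw inside some $C_i$, together with a vertex of another component, would create $\textsf{diamond}\cup K_1$ or $\textsf{paw}\cup K_1$; hence every $C_i$ is a connected $\{P_4,\textsf{paw},\textsf{diamond}\}$-free graph. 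The first key step is to show that such a graph must be either a complete graph or a complete bipartite graph. This is the cograph case of Olariu's structure theorem for paw-free graphs, and it can be verified directly from the cotree of $C_i$.

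With each $C_i$ now a clique or a complete bipartite graph, the final step combines the components using the two leftover forbidden graphs $P_3\cup 2K_1$ and $2K_2\cup 2K_1$. If some $C_i$ is a genuine (non-clique) complete bipartite graph, it contains an induced $P_3$, and forbidding $P_3\cup 2K_1$ then forces $t\le 2$ with the other component a clique, placing $G$ in the second family. Otherwise every $C_i$ is a clique, and forbidding $2K_2\cup 2K_1$, together with the observation that an induced $2K_2$ cannot sit inside one clique and so must use two distinct clique-components while the two isolated vertices must come from two further components, forces $H$ to have either at most one clique of size $\ge 2$ or at most three components; the former is a clique plus isolated vertices (the second family, with a degenerate bipartite block) and the latter is the first family. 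I expect the main obstacle to be precisely this closing bookkeeping: one must track the degenerate cases carefully (empty parts, the overlap between the two families, and small cliques that are simultaneously complete bipartite) and repeatedly use the somewhat delicate fact that induced copies of $2K_2$, $P_3$, diamond and paw are constrained by which component can host them. The component-structure step is the conceptual heart of the argument, while the rest is a finite-flavoured but careful case analysis.
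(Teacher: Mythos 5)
You should note first that this paper does not actually prove the lemma: it is quoted, with citation, from \cite[Theorem 4.3]{AV} and then used as a black box (via Proposition~\ref{prop:P4PawDiamondIsInKKL}) to derive Theorem~\ref{teo:classification}. So there is no in-paper proof to compare against; what you have written is an independent argument, and as far as I can check it is correct. Your complement dictionary is accurate on both sides: induced subgraphs of $K_{m,n,o}$ are exactly the graphs whose complements are disjoint unions of at most three cliques; induced subgraphs of $\overline{K_n}\vee(K_m+K_o)$ are exactly those whose complements split as a clique plus a complete bipartite graph (one side possibly empty); and the five complements $P_4$, $P_3\cup 2K_1$, $2K_2\cup 2K_1$, $\mathsf{diamond}\cup K_1$, $\mathsf{paw}\cup K_1$ are computed correctly, which settles the forward direction. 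In the converse, each step holds: connectedness of $G$ plus $P_4$-freeness forces $\overline{G}$ to be a disconnected cograph; the forbidden $\mathsf{diamond}\cup K_1$ and $\mathsf{paw}\cup K_1$ then force every component of $\overline{G}$ to be $\{P_4,\mathsf{paw},\mathsf{diamond}\}$-free, hence complete or complete bipartite (your appeal to Olariu can be replaced by a three-line cotree argument: in a join $A\vee B$ with $A$ disconnected, an edge of $B$ creates a diamond, and once $B$ is edgeless an edge of $A$ creates a paw); and the closing case analysis with $P_3\cup 2K_1$ and $2K_2\cup 2K_1$ does close as you predict (a non-clique component forces $t=2$ with the other component a clique, since a non-clique complete bipartite second component would supply two nonadjacent vertices completing $P_3\cup 2K_1$; in the all-clique case either $t\le 3$, giving the tripartite family, or $t\ge 4$ and at most one component has size at least $2$, giving the split family with a degenerate block). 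Two small remarks: your claim that all four remaining forbidden complements have the form ``connected graph $\cup\,K_1$'' is not literally true of $P_3\cup 2K_1$ and $2K_2\cup 2K_1$, but you only invoke it for the diamond and the paw, so nothing breaks; and your intermediate structural fact---connected $\{P_4,\mathsf{paw},\mathsf{diamond}\}$-free graphs are complete or complete bipartite---is essentially the equivalence $(2)\Leftrightarrow(3)$ of Theorem~\ref{teo:classification}, which the paper deduces \emph{from} this lemma, whereas you derive it independently (from Olariu's theorem or the cotree) and build the lemma on top of it. Your route is therefore non-circular and would make Section~\ref{section:classification} self-contained instead of resting on \cite{AV}.
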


\begin{proposition}\label{prop:P4PawDiamondIsInKKL}
   If a simple connected graph is $\{P_4, \sf{paw}, \sf{diamond}\}$-free, then it is an induced subgraph of $K_{m,n,o}$ or $\overline{K_n} \vee (K_m+K_o)$.
\end{proposition}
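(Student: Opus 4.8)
The plan is to derive the statement directly from Lemma~\ref{lem:classificationgamma2} by comparing its forbidden family $\{P_4, K_5\setminus P_2, K_6\setminus M_2, \ltimes, \sf{dart}\}$ with the smaller family $\{P_4, \sf{paw}, \sf{diamond}\}$. Since $P_4$ occurs in both, the whole reduction rests on a single claim: each of $K_5\setminus P_2$, $K_6\setminus M_2$, $\ltimes$ and $\sf{dart}$ contains an induced $\sf{paw}$ or $\sf{diamond}$. Granting the claim, let $G$ be $\{P_4,\sf{paw},\sf{diamond}\}$-free. If $G$ contained an induced copy of one of these four larger forbidden graphs, then, since an induced subgraph of an induced subgraph of $G$ is again induced in $G$, that copy would force an induced $\sf{paw}$ or $\sf{diamond}$ in $G$, contradicting the hypothesis; and $P_4$ is forbidden outright. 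Hence $G$ is $\{P_4, K_5\setminus P_2, K_6\setminus M_2, \ltimes, \sf{dart}\}$-free, and Lemma~\ref{lem:classificationgamma2} places $G$ as an induced subgraph of $K_{m,n,o}$ or $\overline{K_n}\vee(K_m+K_o)$, as desired.

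It remains to verify the claim, which is a finite inspection of four graphs on at most six vertices. For $K_5\setminus P_2$ and $K_6\setminus M_2$ I would exhibit a $\sf{diamond}$ (a $K_4$ with one edge deleted) as the subgraph induced by a nonadjacent pair of vertices together with two of their common neighbors, those two neighbors being adjacent to each other: in $K_5\setminus P_2$ one takes a deleted pair and two of the three remaining, mutually adjacent vertices, and in $K_6\setminus M_2$ one takes the two endpoints of a deleted matching edge together with two vertices joined to both. For $\ltimes$ and $\sf{dart}$ it is even more direct: deleting a pendant (degree-one) vertex from $\ltimes$ leaves a $\sf{paw}$ (a triangle with one pendant edge), while deleting the unique pendant vertex of $\sf{dart}$ leaves a $\sf{diamond}$.

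I do not anticipate a genuine obstacle: once the claim is in hand, the proof is a one-line appeal to Lemma~\ref{lem:classificationgamma2}. The only point requiring care is the bookkeeping inside the claim, namely confirming in each of the four cases that the selected vertices induce exactly the asserted graph, with the intended nonedge absent and no spurious edges present. Reading this off the adjacency relations of these small graphs is immediate, so there is no structural difficulty to overcome.
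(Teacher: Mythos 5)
Your proposal is correct and follows essentially the same route as the paper: it reduces the statement to Lemma~\ref{lem:classificationgamma2} by checking that each of $K_5\setminus P_2$, $K_6\setminus M_2$, $\ltimes$ and $\sf{dart}$ contains an induced $\sf{paw}$ or $\sf{diamond}$, so that any $\{P_4,\sf{paw},\sf{diamond}\}$-free graph is automatically free of the larger family. The only immaterial difference is which of the two small graphs you exhibit in each case (e.g.\ you find a $\sf{diamond}$ in $K_5\setminus P_2$ and in $\sf{dart}$, where the paper points to a $\sf{paw}$), and your verifications are accurate.
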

\begin{proof}
First note that $\sf{paw}$ is an induced subgraph of $K_5\setminus P_2$, $\ltimes$ and $\sf dart$, and $\sf{diamond}$ is an induced subgraph of $K_6\setminus M_2$.
Therefore, if $G$ is $\{P_4, \sf{paw}, \sf{diamond}\}$-free, then $G$ is $\{P_4, K_5\setminus P_2, K_6\setminus M_2, \ltimes, \sf{dart}\}$-free.
The result then follows by Lemma \ref{lem:classificationgamma2}.
\end{proof}

Now, we have the following characterization.

\begin{theorem}\label{teo:classification}
For $G$ a simple connected graph, the following are equivalent:
\begin{enumerate}
\item $G$ has only 1 trivial distance ideal over $\mathbb{Z}$.
\item $G$ is $\{P_4,\sf{paw},\sf{diamond}\}$-free.
\item $G$ is an induced subgraph of $K_{m,n}$ or $K_{n}$.
\end{enumerate}
\end{theorem}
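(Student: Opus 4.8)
The plan is to prove the cycle of implications $(1)\Rightarrow(2)\Rightarrow(3)\Rightarrow(1)$; throughout one may assume $G$ has at least one edge, so that $I^{\mathbb{Z}}_1(G,X_G)=\langle 1\rangle$ is automatically trivial. For $(1)\Rightarrow(2)$ I would argue by contrapositive from Lemma~\ref{lemma:P4PawDiamondForbidden}. If $G$ contained any of $P_4$, $\mathsf{paw}$ or $\mathsf{diamond}$ as an induced subgraph, then, because these are forbidden graphs for $\Lambda^{\mathbb{Z}}_1$ all of whose supergraphs are again forbidden, $G$ would lie in $\mathsf{Forb}^{\mathbb{Z}}_1$; that is, $I^{\mathbb{Z}}_2(G,X_G)$ would be trivial and $G$ would have at least two trivial distance ideals, contradicting $(1)$. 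Hence $(1)$ forces $G$ to be $\{P_4,\mathsf{paw},\mathsf{diamond}\}$-free.

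For $(2)\Rightarrow(3)$ I would first invoke Proposition~\ref{prop:P4PawDiamondIsInKKL} to realize $G$ as an induced subgraph of a complete tripartite graph $K_{m,n,o}$ or of a graph $\overline{K_n}\vee(K_m+K_o)$, and then refine using diamond- and paw-freeness. In the tripartite case $G$ is itself complete multipartite with at most three parts; if it had three parts one of which had size at least two, then two vertices of the large part together with one vertex from each of the other two parts would induce a $\mathsf{diamond}$, so either $G$ has at most two parts (a complete bipartite graph) or all parts are singletons (a complete graph). In the second case I would split according to which of the two clique-blocks $A,B$ and the independent block $S$ survive in $G$: if all three are nonempty and some clique-block has two vertices, that edge together with a common neighbour in $S$ forms a triangle to which a vertex of the other clique-block is attached through the single vertex of $S$, producing a $\mathsf{paw}$, so both clique-blocks shrink to one vertex and $G$ is complete bipartite with parts $\{a,b\}$ and $S$; if one clique-block is empty, the survivors form a complete join of a clique and an independent set, in which a $\mathsf{diamond}$ appears unless one of the two sides is a single vertex, again yielding a star or a complete graph; and if $S$ is empty, connectivity forces one clique-block to vanish and $G$ is complete. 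In every configuration $G$ is an induced subgraph of some $K_{m,n}$ or $K_n$.

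For $(3)\Rightarrow(1)$ I would show $I^{\mathbb{Z}}_2(G,X_G)$ is non-trivial, which together with $I^{\mathbb{Z}}_1(G,X_G)=\langle 1\rangle$ gives exactly one trivial distance ideal. When $G=K_n$ this is immediate from Theorem~\ref{distanceidealscompletegraphs}, which gives $I^{\mathbb{Z}}_2(K_n,X_{K_n})=\langle x_j-1 : j\in[n]\rangle$; this ideal does not contain $1$, since every generator vanishes at $X_{K_n}=\mathbf 1$. When $G=K_{p,q}$ I would instead exhibit a single evaluation over a field under which the distance matrix has rank at most one. Setting every indeterminate equal to $2$ and working over $\mathbb{F}_3$ turns each within-part block of $D(K_{p,q},X_{K_{p,q}})$ into $-\mathsf{J}$ and each between-part block into $\mathsf{J}$, so $D(K_{p,q},\mathbf 2)$ has all rows proportional and hence rank one; thus the ring homomorphism $\mathbb{Z}[X_{K_{p,q}}]\to\mathbb{F}_3$ sending $x_u\mapsto 2$ annihilates every $2\times2$ minor while sending $1$ to $1\neq 0$, whence $I^{\mathbb{Z}}_2(K_{p,q},X_{K_{p,q}})$ cannot contain $1$ (this is exactly the specialization content of Proposition~\ref{teo:eval1}).

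The main obstacle I anticipate is twofold. First, the refinement step in $(2)\Rightarrow(3)$ must be organized so that each possible survivor configuration inside $K_{m,n,o}$ and $\overline{K_n}\vee(K_m+K_o)$ is pinned down by a single induced $\mathsf{diamond}$ or $\mathsf{paw}$, and the degenerate subcases (empty blocks, connectivity) must be checked to land correctly in $K_{m,n}$ or $K_n$. Second, and more delicate, is the non-triviality of $I^{\mathbb{Z}}_2$ for complete bipartite graphs: the natural characteristic-zero evaluations never lower the rank below two, so the argument genuinely requires passing to $\mathbb{F}_3$ to collapse the rank to one.
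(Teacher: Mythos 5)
Your proposal is correct, and its skeleton---the cycle $(1)\Rightarrow(2)\Rightarrow(3)\Rightarrow(1)$ via Lemma~\ref{lemma:P4PawDiamondForbidden}, Proposition~\ref{prop:P4PawDiamondIsInKKL}, and a case analysis that pins each bad configuration to an induced $\mathsf{diamond}$ or $\mathsf{paw}$---matches the paper's proof step for step; your ``surviving blocks'' organization of the $\overline{K_n}\vee(K_m+K_o)$ case is only a reshuffling of the paper's parameter cases, and the same forbidden subgraphs are produced. The genuine difference is in $(3)\Rightarrow(1)$. For $K_n$ the paper re-derives the minor shapes rather than citing Theorem~\ref{distanceidealscompletegraphs} as you do (note that for $n=2$ that theorem gives $\langle x_1x_2-1\rangle$ rather than $\langle x_j-1\rangle$, but your evaluation at the all-ones vector handles both cases). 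For the complete bipartite graphs the paper enumerates the possible $2$-minors of $D(K_{m,n},X_{K_{m,n}})$ (namely $x_ix_j-4$, $x_iy_j-1$, $2x_i-1$, $3$, etc.) and computes the second distance ideal exactly, obtaining $\langle x_1-2,\dots,x_m-2,y_1-2,\dots,y_n-2,3\rangle$ when $m,n\geq 2$ and $\langle x_1-2,\dots,x_m-2,2y-1\rangle$ for stars, whose quotient rings are nonzero. You instead exhibit a single ring homomorphism $\mathbb{Z}[X_{K_{p,q}}]\to\mathbb{F}_3$ (evaluate every variable at $2$, reduce mod $3$) under which the generalized distance matrix collapses to rank one, so every $2$-minor dies while $1\mapsto 1$. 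The two arguments have the same arithmetic core---your homomorphism is exactly the quotient by the paper's ideal---but yours is shorter and skips the minor enumeration, at the cost of not producing the ideal itself, which is the stronger information the paper records (and reuses for Smith normal forms). Your closing observation that no characteristic-zero specialization can succeed for $K_{m,n}$ with $m,n\geq 2$ is also accurate, and it is precisely the reason $C_4$ joins the forbidden list over $\mathbb{R}$ in Theorem~\ref{teo:classification2}.
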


\begin{proof}
$(1)\implies (2)$ follows from Lemma \ref{lemma:P4PawDiamondForbidden}.

$(2)\implies (3)$: %It follows from \cite[Proposition 4]{BHL04}.
%An alternative proof can be obtained from \cite[Theorem 4.3]{AV}\label{lem:classificationgamma2}.
By Proposition \ref{prop:P4PawDiamondIsInKKL}, $G$ is an induced subgraph of $K_{m,n,o}$ or $\overline{K_n} \vee (K_m+K_o)$.
However, there are induced subgraphs in $K_{m,n,o}$ and $\overline{K_n} \vee (K_m+K_o)$ isomorphic to $\sf{paw}$ or $\sf{diamond}$.
By inspection, we are going to determine that $G$ is an induced subgraph of $K_{m,n}$ or $K_{n}$.

If $m,n\geq 1$ and $o\geq 2$, then $K_{m,n,o}$ contains $\sf{diamond}$ as induced subgraph.
Therefore, $o\leq 1$.
For simplicity, we assume $m\geq n\geq o$.
Thus, we have two cases:
\begin{enumerate}
   \item $o=0$, or
   \item $o=1$.
\end{enumerate}
In the first case, $G=K_{m,n}$.
In the second case, $K_{1,1,1}$ is the unique possibility, because if $m\geq 2$ and $n\geq 1$, then $K_{m,n,1}$ would contain $\sf{diamond}$ as induced subgraph.
Indeed, $K_{2,1,1}$ is isomorphic to $\sf{diamond}$.

If $m\geq 2$ and $n\geq 2$, then $\overline{K_n} \vee (K_m+K_o)$ contains $\sf{diamond}$ as induced subgraph.
For simplicity, we assume $m\geq o$.
Thus, we have two cases:
\begin{enumerate}
   \item $m\leq 1$, or
   \item $n\leq 1$.
\end{enumerate}
For case 1, we have that $o\leq m\leq 1$ and $n\geq 2$, thus $G$ is isomorphic to a bipartite graph $K_{2,n}$.
And for case 2, we have two cases: either $m\geq 2$ or $m=1$.
In the first case, if $n=1$, then $o=0$, otherwise $\sf{paw}$ will be an induced subgraph of $\overline{K_1} \vee (K_m+K_o)$.
But $\overline{K_1} \vee (K_m)$ is isomorphic to a complete graph with $m+1$ vertices.
In the second case, $G$ is an induced subgraph of $\overline{K_1} \vee (K_1+K_1)\cong P_3$.

$(3)\implies (1)$: Note that any non-trivial connected graph has trivial first distance ideal.
For an isolated vertex we have $I^\mathbb{Z}_1(K_1,\{ x\}) =\left< x \right>$.
Now we have to compute the second distance ideals of $K_{n}$ and $K_{m,n}$.
The 2-minors of the generalized distance matrix of a complete graphs are of the forms $x_ix_j-1$ and $x_i-1$.
Since $x_ix_j-1\in \left< x_1-1, \dots, x_n-1 \right>$,
\begin{equation}\label{eqn:gamma1}
	I^\mathbb{Z}_2(K_n,X_{K_n})=
	\begin{cases}
		\left< x_1x_2-1 \right> & \text{if } n=2, \text{ and,}\\
		\left< x_1-1, \dots, x_n-1 \right> & \text{if } n\geq 3.
	\end{cases}
\end{equation}
Thus complete graphs have at most one trivial distance ideal.

If $m\geq2$ and $n=1$, then the 2-minors of $D(K_{m,1},\{x_1, \dots, x_m, y\})$ of $K_{m,1}$ have one of the following forms: $x_ix_j-4$, $2x_i-4$, $x_i-2$, $x_iy-1$ and $2y-1$.
Thus
\[
I^\mathbb{Z}_2(K_{m,1},\{x_1, \dots, x_m, y\})=\langle x_1-2, \dots, x_m-2,2y-1\rangle.
\]
If $m\geq2$ and $n\geq2$, then the 2-minors of $D(K_{m,n},\{x_1, \dots, x_m, y_1, \dots, y_n\})$ of $K_{m,n}$ have one of the following forms: $x_ix_j-4$, $2x_i-4$, $x_i-2$, $x_iy_j-1$, $2x_i-1$,
$y_iy_j-4$, $2y_i-4$, $y_i-2$, $2y_i-1$ and 3.
Thus
\[
I^\mathbb{Z}_2(K_{m,n},\{x_1, \dots, x_m, y_1, \dots, y_n\})=\langle x_1-2, \dots, x_m-2, y_1-2, \dots, y_n-2,3\rangle.
\]
Therefore bipartite graphs have at most one trivial distance ideal.
\end{proof}

We finish this section by classifying graphs which have exactly 1 trivial distance ideal over $\mathbb{R}$.

% \begin{figure}[h!]
% \begin{center}
% 	\begin{tikzpicture}[scale=.7]
% 	\tikzstyle{every node}=[minimum width=0pt, inner sep=2pt, circle]
% 	\draw (-1,0) node (v2) [draw] {};
% 	\draw (0,-1) node (v1) [draw] {};
% 	\draw (1,0) node (v3) [draw] {};
% 	\draw (0,1) node (v4) [draw] {};
% 	\draw (v1) -- (v2);
% 	\draw (v1) -- (v3);
% 	%\draw (v2) -- (v3);
% 	\draw (v2) -- (v4);
% 	\draw (v3) -- (v4);
% 	\end{tikzpicture}
% \end{center}
% \caption{The cycle $C_4$ with 4 vertices.}
% \label{fig:forbiddenC4}
% \end{figure}

\begin{lemma}\label{lemma:P4PawDiamondC4Forbidden}
    The graphs $P_4$, $\sf{paw}$, $\sf{diamond}$ and $C_4$ are minimal forbidden graphs for graphs with 1 trivial distance ideal over $\mathbb{R}$.
\end{lemma}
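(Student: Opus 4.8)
The plan is to follow the template of Lemma~\ref{lemma:P4PawDiamondForbidden}, verifying separately that each of the four graphs is forbidden and that each is minimal, with $C_4$ being the one genuinely new case. \textbf{Forbiddenness.} For $P_4$, $\sf{paw}$ and $\sf{diamond}$ I would argue that triviality transfers from $\mathbb{Z}$ to $\mathbb{R}$ for free: the second distance ideals of these graphs are trivial over $\mathbb{Z}$ by Lemma~\ref{lemma:P4PawDiamondForbidden}, and since the generating $2$-minors are the same polynomials in both settings, any expression of $1$ as a $\mathbb{Z}[X]$-combination of them is also an $\mathbb{R}[X]$-combination, so $I^\mathbb{R}_2$ is trivial as well. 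The graph $C_4=K_{2,2}$ is exactly the point of departure between the two rings: the $2\times 2$ submatrix of $D(C_4,X_{C_4})$ on rows $\{1,2\}$ and columns $\{3,4\}$ is $\left(\begin{smallmatrix}2&1\\1&2\end{smallmatrix}\right)$, a matrix of off-diagonal distance entries whose determinant is the constant $3$; hence $3\in I^\mathbb{R}_2(C_4,X_{C_4})$, and since $3$ is a unit in $\mathbb{R}$ this ideal is trivial. This is the specialization $m=n=2$ of the computation $I^\mathbb{Z}_2(K_{m,n})=\langle x_1-2,\dots,y_n-2,3\rangle$ from the proof of Theorem~\ref{teo:classification}, where the constant $3$ is responsible for nontriviality over $\mathbb{Z}$ but not over $\mathbb{R}$.

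\textbf{Upward closure.} Next I would show that every graph $H$ having one of the four as an induced subgraph is again forbidden. Each of $\sf{paw}$, $\sf{diamond}$ and $C_4$ has diameter $2$, so Lemma~\ref{lemma:distance2inducedmonotone} gives $I^\mathbb{R}_2(G,X_G)\subseteq I^\mathbb{R}_2(H,X_H)$ whenever such a $G$ is an induced subgraph of $H$; as $I^\mathbb{R}_2(G,X_G)=\langle 1\rangle$, triviality propagates upward. For $P_4$, whose diameter is $3$, I would instead appeal to Example~\ref{example:P4inducemonotone}, which produces a $2\times 2$ submatrix of determinant $-1$ in $D(H,X_H)$ for any $H$ containing $P_4$ as an induced subgraph, forcing $I^\mathcal{R}_2$ to be trivial over every ring, in particular over $\mathbb{R}$.

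\textbf{Minimality below.} Finally I would verify that none of the four graphs properly contains a forbidden graph. All four have four vertices, so it suffices to note that every connected graph on at most three vertices, namely $K_1$, $K_2$, $P_3$ and $K_3$, satisfies $\Phi_\mathbb{R}=1$. Indeed $I^\mathbb{R}_2(K_3,X_{K_3})=\langle x_0-1,x_1-1,x_2-1\rangle$ is proper (its variety is the point $(1,1,1)$), and the $m=2$ case of the star computation in Theorem~\ref{teo:classification} gives $I^\mathbb{R}_2(P_3)=\langle x_1-2,x_2-2,2y-1\rangle$, which is proper over $\mathbb{R}$ because $2y-1$ is not a unit; the cases $K_1,K_2$ are immediate. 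Since the connected proper induced subgraphs of $P_4$, $\sf{paw}$, $\sf{diamond}$ and $C_4$ are all drawn from $\{K_1,K_2,P_3,K_3\}$, none contains a forbidden graph. The routine computations transfer directly from the $\mathbb{Z}$ case; the one step demanding care is to confirm that passing to $\mathbb{R}$ does not also trivialize the second distance ideals of $P_3$ and $K_3$---that is, that $2y-1$ and $x_i-1$ remain nonunits---so that $C_4$ joins the forbidden list without $P_3$ or $K_3$ joining it, which would break minimality.
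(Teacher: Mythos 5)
Your proposal is correct, and its logical skeleton --- forbiddenness of the four graphs, upward closure via Lemma~\ref{lemma:distance2inducedmonotone} and Example~\ref{example:P4inducemonotone}, and the absence of forbidden proper induced subgraphs --- is exactly the skeleton of the paper's proof. The difference lies in how the base facts are established. The paper verifies both that the four graphs have exactly two trivial distance ideals over $\mathbb{R}$ and that no proper induced subgraph does by machine computation (the code in the Appendix), whereas you replace these computations with hand arguments: forbiddenness of $P_4$, $\sf{paw}$ and $\sf{diamond}$ is inherited from the $\mathbb{Z}$ case (Lemma~\ref{lemma:P4PawDiamondForbidden}) via the observation that a relation $1=\sum_i g_i m_i$ with $g_i\in\mathbb{Z}[X_G]$ persists in $\mathbb{R}[X_G]$; forbiddenness of $C_4$ comes from the single constant $2$-minor $\det\left(\begin{smallmatrix}2&1\\1&2\end{smallmatrix}\right)=3$, a unit in $\mathbb{R}$ but not in $\mathbb{Z}$, which correctly isolates the one place where the $\mathbb{Z}$ and $\mathbb{R}$ classifications diverge; and the no-forbidden-proper-subgraph condition reduces to checking the four connected graphs on at most three vertices. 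This buys a self-contained, human-checkable proof, whereas the paper's version outsources the same content to software.

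One caveat: your stated reason that $I^\mathbb{R}_2(P_3)=\langle x_1-2,x_2-2,2y-1\rangle$ is proper ``because $2y-1$ is not a unit'' is not by itself a valid argument, since an ideal generated by non-units can be trivial (for instance $\langle y,\,2y-1\rangle=\langle 1\rangle$). The correct justification is the one you already use for $K_3$: the generators, and hence all $2$-minors, have a common real zero, namely $(x_1,x_2,y)=(2,2,1/2)$, so the ideal lies in the kernel of evaluation at that point and is therefore proper. The same repair applies to your closing sentence about ``$2y-1$ and $x_i-1$ remaining nonunits'': what must be confirmed is that these ideals remain proper over $\mathbb{R}$, which the common-zero argument settles in one line, so the gap is purely one of phrasing rather than substance.
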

\begin{proof}
    The graphs $P_4$, $\sf{paw}$, $\sf{diamond}$ and $C_4$ have exactly 2 trivial distance ideals over $\mathbb{R}$, which can be verified with the code in the Appendix.
The minimality of $\sf{paw}$, $\sf{diamond}$ and $C_4$ follows from Lemma \ref{lemma:distance2inducedmonotone}.  Minimality of $P_4$ follows from Example \ref{example:P4inducemonotone} and the fact that no proper induced subgraph of these graphs has 2 trivial distance ideals over $\mathbb{R}$.
\end{proof}

\begin{theorem}\label{teo:classification2}
For $G$ a simple connected graph, the following are equivalent:
\begin{enumerate}
\item $G$ has only 1 trivial distance ideal over $\mathbb{R}$.
\item $G$ is $\{P_4,\sf{paw},\sf{diamond}, C_4\}$-free.
\item $G$ is an induced subgraph of $K_{1,n}$ or $K_{n}$.
\end{enumerate}
\end{theorem}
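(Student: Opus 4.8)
The plan is to mirror the proof of Theorem~\ref{teo:classification}, exploiting the fact that the only structural difference between the integral and real classifications is caused by a single constant generator. For $(1)\implies(2)$ I would argue contrapositively: by Lemma~\ref{lemma:P4PawDiamondC4Forbidden} each of $P_4$, $\mathsf{paw}$, $\mathsf{diamond}$ and $C_4$ has a trivial second distance ideal over $\mathbb{R}$. The three graphs $\mathsf{paw}$, $\mathsf{diamond}$, $C_4$ have diameter $2$, so if $G$ contains any of them as an induced subgraph, Lemma~\ref{lemma:distance2inducedmonotone} forces $I^{\mathbb{R}}_2(G,X_G)$ to be trivial; for $P_4$ the same conclusion follows from Example~\ref{example:P4inducemonotone}. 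Either way $\Phi_{\mathbb{R}}(G)\geq 2$, contradicting $(1)$. Hence $G$ is $\{P_4,\mathsf{paw},\mathsf{diamond},C_4\}$-free.

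For $(2)\implies(3)$ I would bootstrap off the integral classification rather than redo the combinatorics. Being $\{P_4,\mathsf{paw},\mathsf{diamond},C_4\}$-free is the conjunction of being $\{P_4,\mathsf{paw},\mathsf{diamond}\}$-free and being $C_4$-free. By the equivalence $(2)\iff(3)$ of Theorem~\ref{teo:classification} (which rests on Proposition~\ref{prop:P4PawDiamondIsInKKL} and its subsequent inspection), the first condition already realizes $G$ as an induced subgraph of $K_{m,n}$ or of $K_n$. A connected induced subgraph of $K_{m,n}$ is again a complete bipartite graph $K_{a,b}$ with $a,b\geq 1$, and such a graph contains $C_4=K_{2,2}$ as an induced subgraph precisely when $\min(a,b)\geq 2$; imposing $C_4$-freeness therefore forces $\min(a,b)\leq 1$, so $G$ is a star and hence an induced subgraph of $K_{1,n}$. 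Since complete graphs are already $C_4$-free, the complete case survives unchanged, which yields $(3)$.

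For $(3)\implies(1)$ I would compute the second distance ideals over $\mathbb{R}$ directly, reusing the determinantal descriptions obtained for complete graphs (Theorem~\ref{distanceidealscompletegraphs}) and for stars. For $K_n$ the $2$-minors of $D(K_n,X_{K_n})$ have the forms $x_ix_j-1$ and $x_i-1$, giving $\langle x_1-1,\dots,x_n-1\rangle$ (or $\langle x_1x_2-1\rangle$ when $n=2$); for the star $K_{1,m}$ the $2$-minors are $x_ix_j-4$, $2x_i-4$, $x_i-2$, $x_iy-1$ and $2y-1$, giving $\langle x_1-2,\dots,x_m-2,\,2y-1\rangle$. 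None of these generators is a nonzero constant, so over $\mathbb{R}$ each ideal has nonempty variety and is therefore non-trivial; since $I^{\mathbb{R}}_1$ is trivial for any graph with an edge, $\Phi_{\mathbb{R}}(G)=1$.

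The crux, and the single place where the real case genuinely departs from the integral one, is the appearance of the constant generator $3$ in $I^{\mathbb{Z}}_2(K_{m,n},\cdot)$ for $m,n\geq 2$. Over $\mathbb{Z}$ this $3$ is a non-unit and keeps the ideal proper, so $K_{m,n}$ is admissible; over $\mathbb{R}$ it is a unit, collapsing the ideal to $\langle 1\rangle$ and rendering every $K_{m,n}$ with $m,n\geq 2$ (in particular $C_4$) forbidden. Recognizing that this lone constant is exactly what demotes the bipartite family from all $K_{m,n}$ down to the stars $K_{1,n}$ is the conceptual heart of the argument; the remaining work is the routine $2$-minor computations and the short $C_4$-freeness pruning above.
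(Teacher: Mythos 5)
Your proposal is correct and follows essentially the same route as the paper, whose entire proof is a one-line derivation from Lemma~\ref{lemma:P4PawDiamondC4Forbidden}, Theorem~\ref{teo:classification}, and the observation that $I^{\mathbb{R}}_2\left(K_{m,n},X_{K_{m,n}}\right)$ is trivial when $m\geq n\geq 2$ --- precisely the three ingredients you use, with the details spelled out. One small caution: ``no generator is a nonzero constant'' does not by itself imply the variety is nonempty (non-constant generators can still combine to a unit); what actually closes that step is that your listed generators visibly share a common zero, namely $x_i=1$ for $K_n$ and $x_i=2$, $y=1/2$ for the star.
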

\begin{proof}
The statement can be derived from Lemma \ref{lemma:P4PawDiamondC4Forbidden}, Theorem~\ref{teo:classification} and the observation that %$C_4\simeq K_{2,2}$ and
$I^\mathbb{R}_2\left(K_{m,n},X_{K_{m,n}}\right)$ is trivial when $m\geq n\geq 2$.
\end{proof}

A graph is {\it trivially perfect} if for every induced subgraph the stability number equals the number of maximal cliques.
In \cite[Theorem 2]{G}, Golumbic characterized trivially perfect graphs as $\{P_4,C_4\}$-free graphs. There are other equivalent characterization of this family; see \cite{B,CCY,Rubio}.
Therefore, from Theorem~\ref{teo:classification2}, graphs with 1 trivial distance ideal over $\mathbb{R}$ are a subclass of trivially perfect graphs.

A related family of graphs come from the {\it graph sandwich problem} for property $\Pi$, which is defined as follows.
Given two graphs $G_1 = (V, E_1)$ and $G_2 = (V, E_2)$ such that $E_1 \subseteq E_2$, is there a graph $G = (V, E)$ such that $E_1 \subseteq E \subseteq E_2$ which satisfies property $\Pi$?
In the literature there are several characterizations where the problem restricted to the graphs found in Theorem~\ref{teo:classification2} lies certain complexity class.
For instance, in \cite{DFMT} that the {\sf paw}-free graph sandwich problem is in ${\sf P}$.
See also \cite{G1}.

In \cite[Theorem 3]{HW} it was proved that the distance matrices of trees have exactly 2 invariant factors equal to 1.
This differs from the critical group, since the Laplacian matrix of any tree has all invariant factors equal to 1.
An interesting and difficult question will be to characterize the graphs whose distance matrix has at most 2 invariant factors equal to 1.

% \section{Applications to distance group and Smith group}\label{section:applications}

% By evaluating the second distance ideal of the complete graph and the complete bipartite graph at $\bf 0$ (already obtained in the proof of Theorem~\ref{teo:classification}), we obtain that the second invariant factor of the distance matrix is equal to 1.
% This implies that the only graph whose distance matrix have at most one invariant factor equal to 1 is the graph with one vertex.

%It was proved that the complete graphs, the distance group $\mathcal{D}(K_n)$ of the complete graphs is isomorphic to $\mathbb{Z}_2^{n-1}\oplus\mathbb{Z}_{2n-2}$.
%And the distance group of the bipartite graphs is described next.

%\begin{proposition}\cite[Corollary 5]{HW}
%    Let $N=n_1+n_2$.
%    Then,
%    \[
%    \mathcal{D}(K_{m,n})=
%    \begin{cases}
%        \mathbb{Z}_2^{N-4}\oplus\mathbb{Z}_4\oplus\mathbb{Z}_{3 n_1n_2 - 4N + 4} & \text{if } n_1 \text{ and } n_2 \text{ are even,}\\
%        \mathbb{Z}_2^{N-3}\oplus\mathbb{Z}_{6 n_1n_2 - 8N + 8} & \text{otherwise.}
%    \end{cases}
%    \]
%\end{proposition}

%Instead of taking the distance matrix, we can use a variant, in which, in the diagonal we put $d(u)=-\sum_u d(u,v)$

\section{Acknowledgements}
C.A. Alfaro was partially supported by SNI and CONACyT.

\appendix
\section{Computing distance ideals with Macaulay 2 on SageMath}

%It is useful having a tool for computing distance ideals, for this w
We give a code for computing the distance ideals of graphs over $\mathbb{Z}$ with Macaulay2 (see \cite{macaulay2}) using the widely used interface of SageMath (see \cite{sage}).

%[caption={Code for computing distance ideals over $\mathbb{R}$.}, label={code:distanceideals}]
\begin{lstlisting}
# The input g is a graph
def DistanceIdealsZZ(g):
    n = g.order()
    Distance = matrix(n)
    for i in range(n):
        for j in range(i,n):
            Distance[j,i] = g.distance(j,i)
            Distance[i,j] = Distance[j,i]
    S='['
    for i in range(n):
        if i > 0 :
            S = S + ","
        S = S + 'x' + str(i)
    S=S+']'
    R = macaulay2.ring("ZZ",S).to_sage()
    R.inject_variables()
    GDistance = diagonal_matrix(list(R.gens())) + Distance
    print(GDistance)
    for i in range(n+1):
        I = R.ideal(GDistance.minors(i))
        print("Distance ideals of size " + str(i))
        print(I.groebner_basis())
\end{lstlisting}
Thus for computing the distance ideals over $\mathbb{Z}$ of the cycle with 4 vertices is the following.

\begin{lstlisting}[firstnumber=23]
C4 = graphs.CycleGraph(4)
DistanceIdealsZZ(C4)
\end{lstlisting}
The output is the following.

\begin{lstlisting}[numbers=none]
Defining x0, x1, x2, x3
[x0  1  2  1]
[ 1 x1  1  2]
[ 2  1 x2  1]
[ 1  2  1 x3]
Distance ideals of size 0
[1]
Distance ideals of size 1
[1]
Distance ideals of size 2
[x0 + 1, x1 + 1, x2 + 1, x3 + 1, 3]
Distance ideals of size 3
[x0*x1 - 2*x0 - 2*x1 + 4, 2*x0*x2 - x0 - x2 - 4, x0*x3 - 2*x0 - 2*x3 + 4, x1*x2 - 2*x1 - 2*x2 + 4, 2*x1*x3 - x1 - x3 - 4, x2*x3 - 2*x2 - 2*x3 + 4]
Distance ideals of size 4
[x0*x1*x2*x3 - x0*x1 - 4*x0*x2 - x0*x3 + 4*x0 - x1*x2 - 4*x1*x3 + 4*x1 - x2*x3 + 4*x2 + 4*x3]
\end{lstlisting}

To compute the ideals over other principal ideal domains, it suffices to replace line 15.
For instance, to compute the ideals over $\mathbb{Q}$, line 15 should be changed to the following.
\begin{lstlisting}[numbers=none]
    R = macaulay2.ring("QQ",S).to_sage()
\end{lstlisting}

%\section*{\refname}

\end{document}